\documentclass[11 pt ]{article}

\usepackage[margin=1in]{geometry}

\usepackage{tikz}
\usetikzlibrary{calc}

\usepackage[none]{hyphenat}
\usepackage[utf8]{inputenc}
\usepackage{float}
\usepackage{amsmath,amsthm,amsfonts,hyperref, amssymb, mathtools}
\usepackage{ dsfont }
\usepackage[utf8]{inputenc}
\usepackage[english]{babel}
\usepackage{graphicx}
\usepackage{mathtools}
\usepackage[export]{adjustbox}
\usepackage[ampersand]{easylist}
\usepackage[autostyle]{csquotes}
\usepackage{enumitem}
\usepackage{bbm}
\usepackage{wasysym}
\usepackage[style=alphabetic, maxalphanames=5, maxnames=5, backend=biber]{biblatex}
\usepackage{wrapfig}
\usepackage[mathscr]{eucal}
\usepackage{comment}

\usepackage[inkscapeformat=png]{svg}

\DeclareMathOperator{\area}{Area}
\let\sf\relax
\DeclareMathOperator{\sf}{SF}
\renewcommand{\geq}{\geqslant}
\renewcommand{\leq}{\leqslant}
\renewcommand{\epsilon}{\varepsilon}
\newcommand{\defeq}{\vcentcolon=}
\newcommand{\eqdef}{=\vcentcolon}

\newcommand{\sfr}{\mathfrak{S}}
\newcommand{\eps}{\varepsilon}

\newcommand\restr[2]{{% we make the whole thing an ordinary symbol
  \left.\kern-\nulldelimiterspace % automatically resize the bar with \right
  #1 % the function
  \littletaller % pretend it's a little taller at normal size
  \right|_{#2} % this is the delimiter
  }}

\newcommand{\littletaller}{\mathchoice{\vphantom{\big|}}{}{}{}}

\title{On the number of sum-free subsets of the square grid}
\author{Anubhab Ghosal
\thanks{Mathematical Institute, University of Oxford, UK. Supported by the Clarendon Fund.
Email: \textbf{ghosal@maths.ox.ac.uk}\\
\indent\textit{MSC (2020):} 11B75, 05A16. \textit{Keywords:} sum-free sets, Cameron--Erd\H{o}s conjecture.}.
}
\date{}

\addbibresource{references.bib}

\newtheorem{theorem}{Theorem}[section]

\newtheorem{prop}[theorem]{Proposition}
\newtheorem{conj}[theorem]{Conjecture}
\newtheorem{lemma}[theorem]{Lemma}
\newtheorem{corollary}{Corollary}[theorem]

\theoremstyle{definition}
\newtheorem{definition}[theorem]{Definition}
\newtheorem{observation}[theorem]{Observation}

\begin{document}

\maketitle

\begin{abstract}
    Generalising the Cameron--Erd\H{o}s conjecture to two dimensions, Elsholtz and Rackham conjectured that the number of sum-free subsets of $[n]^2$ is $2^{0.6n^2+O(n)}$. We prove their conjecture.
\end{abstract}

\section{Introduction}

Let $G$ be an \emph{additive group}, that is, a group equipped with the commutative binary operation $+$. A set $S\subseteq G$ is \emph{sum-free} if there are no solutions to $a+b=c$ in $S$.

\subsection{Sum-free subsets of $[n]$}
 
Let $G=(\mathbb{Z},+)$ and consider $[n]\defeq\{1,\dots, n\}\subset G$. A classical puzzle asks one to determine the largest size of a sum-free subset of $[n]$. The answer is $\lceil\frac{n}{2}\rceil$, attained both by $\{\lfloor \frac{n}{2}\rfloor+1, \dots, n\}$ and the set of odd numbers in $[n]$. These are the only extremal examples and Freiman \cite{freiman1991structure} established \emph{stability}: any large enough sum-free subset of $[n]$ either consists entirely of odd numbers or is close to the second half of an interval.

Having gained some understanding of the large sum-free subsets of $[n]$, a natural next step is to investigate the number of sum-free subsets of $[n]$. For a finite subset $R\subseteq G$, let $M(R)$ denote the size of a maximal sum-free subset $S$ of $R$ and let $\sf(R)$ denote the number of sum-free subsets $S\subseteq R$.

\begin{observation}
\label{obs1}
    As the subsets of a maximal sum-free set are all sum-free,
    $\sf(R)\geq 2^{M(R)}$.
\end{observation}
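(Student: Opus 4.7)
The plan is to exhibit $2^{M(R)}$ distinct sum-free subsets of $R$, which immediately gives the claimed lower bound. The natural candidates all come from a single source: one fixed maximum sum-free subset.

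First I would use the definition of $M(R)$ to fix a sum-free set $S \subseteq R$ with $|S| = M(R)$. The only substantive step is then to observe that sum-freeness is inherited by subsets: if $T \subseteq S$ contained elements $a, b, c \in T$ with $a + b = c$, then $a, b, c$ would already lie in $S$ and witness a forbidden solution there, contradicting the sum-freeness of $S$. Hence every one of the $2^{|S|}$ subsets of $S$ is a sum-free subset of $R$, and since distinct subsets of $S$ are distinct as subsets of $R$, they contribute $2^{M(R)}$ to the count $\sf(R)$.

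There is no genuine obstacle here — the statement is a one-line consequence of the monotonicity of sum-freeness under inclusion, and the choice of $S$ to be of size exactly $M(R)$ (rather than merely inclusion-maximal) is what makes the exponent sharp. Its role in the paper is to establish the lower-bound side of the exponential rate $\sf([n]^2) \geq 2^{M([n]^2)}$; the real work is the matching upper bound, for which one must show that essentially every sum-free subset of $[n]^2$ is contained in (a small number of) near-maximum sum-free sets.
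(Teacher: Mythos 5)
Your proof is correct and is exactly the paper's argument: fix a sum-free set $S\subseteq R$ with $|S|=M(R)$, note that sum-freeness is inherited by subsets, and count the $2^{|S|}$ subsets of $S$. Nothing further is needed.
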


\noindent Consequently, $\sf([n])$ is at least $2^{\frac{n}{2}}$. Cameron and Erd\H{o}s \cite{cameron1990number} conjectured that this is in fact the correct number up to a constant multiplicative factor, that is, $\sf([n])=O(2^{\frac{n}{2}})$.

In their paper, Cameron and Erd\H{o}s showed that the number of sum-free subsets of integers in $[\frac{n}{3}, n]$ is at most $O(2^{\frac{n}{2}})$. So, the remaining step was to bound the number of sum-free subsets of $[n]$ with an element smaller than $\frac{n}{3}$. This was done in 2003 by Green \cite{green2004cameron} and, independently, by Sapozhenko \cite{sapozhenko2008cameron}, thereby settling the celebrated Cameron--Erd\H{o}s conjecture. The crucial ingredient in both proofs was constructing a family of containers, which Green did using Fourier analysis, whereas Sapozhenko's methods were more combinatorial. These works were some of the first applications of the container method, which has since blossomed -- see \cite{balogh2018method} for a survey.

\subsection{Sum-free sets in two dimensions}

The above questions generalise naturally to the study of sum-free sets in the lattice $\mathbb{Z}^d$ under coordinatewise addition. In particular, the problem of determining $M([n]^2)$ was communicated by Harout Aydinian to Oriol Serra and posed as an open problem at the 19th British Combinatorial Conference in 2001 \cite{cameron2005research}. Cameron \cite{cameronnote} showed that $0.6n^2+O(n)\leq M([n]^2) \leq \frac{1}{\sqrt{e}}n^2+O(n)$ and conjectured \cite{cameron2005research, cameronnote} that, in fact, $M([n^2])=0.6 n^2+O(n)$. The following illustrates the construction used for the lower bound.

\begin{definition}
    For $n\in \mathbb{N}$, we refer to the set $\{(x,y)\in\mathbb{R}^2:0.8n\leq x+y < 1.6 n,0\leq x,y\leq n\}$ as \emph{the big stripe}.
\end{definition}

\begin{figure}[H]
    \centering
    \begin{tikzpicture}[scale=6]
  % Parameters
  \def\n{1}
  \def\a{0.8}
  \def\b{1.6}

  % Square corners
  \coordinate (A) at (0,0);
  \coordinate (B) at (\n,0);
  \coordinate (C) at (\n,\n);
  \coordinate (D) at (0,\n);

  % Define the band region by clipping
  \begin{scope}
    % First clip to x+y >= 0.8n
    \clip (0,\a*\n) -- (\a*\n,0) -- (\n,0) -- (\n,\n) -- (0,\n) -- cycle;
    % Then clip to x+y <= 1.6n
    \clip (0,0) -- (0,\n) -- ({\b*\n-\n},\n) -- (\n,{\b*\n-\n}) -- (\n,0) -- cycle;
    % Fill intersection of both
    \fill[orange!20] (0,0) rectangle (\n,\n);
  \end{scope}

  % Draw the bounding square
  \draw[very thick] (A)--(B)--(C)--(D)--cycle;

  % Draw boundary lines
  \draw[teal!70!black,thick]
    (0,{\a*\n}) -- ({\a*\n},0)
    node[midway,sloped,below,text=black] {$x+y=0.8n$};
  \draw[teal!70!black,thick]
    ({\b*\n-\n},\n) -- (\n,{\b*\n-\n})
    node[midway,sloped,below,text=black] {$x+y=1.6n$};

  % Corner dots + labels
  \foreach \p/\pos/\txt in {
    A/below left/{(0,0)},
    B/below right/{(n,0)},
    C/above right/{(n,n)},
    D/above left/{(0,n)}
  }{
    \fill (\p) circle (0.012);
    \node[\pos] at (\p) {\(\txt\)};
  }
\end{tikzpicture}
    \caption{The set of lattice points in the big stripe is sum-free.}
\end{figure}
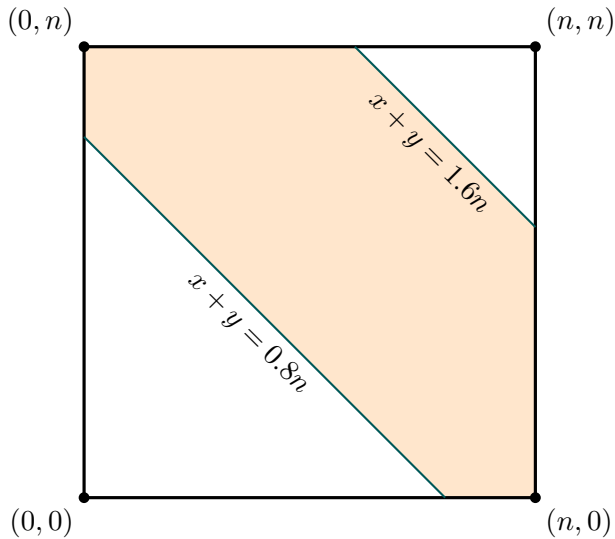

Elsholtz and Rackham \cite{elsholtz2017maximal} proved the following theorem in 2017, settling Cameron's conjecture.

\begin{theorem}[{\cite[Theorem 1.4]{elsholtz2017maximal}}]
\label{max2d}
    The maximal size of a sum-free subset of $[n]^2$ is $M([n]^2)=0.6n^2+O(n)$.
\end{theorem}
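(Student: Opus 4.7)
The lower bound is achieved by the big stripe: any two lattice points $p, q$ in $B \defeq \{(x,y) \in [n]^2 : 0.8n \leq x+y < 1.6n\}$ satisfy $\pi(p) + \pi(q) \geq 1.6n$, where $\pi(x,y) \defeq x+y$, so $p+q$---whether or not it lies in $[n]^2$---has $\pi(p+q) \geq 1.6n$ and hence is not in $B$. Thus $B$ is sum-free, and a direct lattice-point count yields $|B| = 0.6n^2 + O(n)$.

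For the upper bound, let $S \subseteq [n]^2$ be sum-free. The plan is to stratify $S$ along the anti-diagonals $L_k \defeq \{(x,y) \in [n]^2 : x+y = k\}$ for $k = 2, \dots, 2n$, writing $s_k \defeq |S \cap L_k|$ and letting $T_k$ denote the set of first coordinates of $S \cap L_k$, so $T_k \subseteq I_k \defeq \{\max(1, k-n), \dots, \min(n, k-1)\}$. Sum-freeness of $S$ is then equivalent to the following one-dimensional condition: for every pair $(k_1, k_2)$ with $k_1 + k_2 \leq 2n$, the sumset $T_{k_1} + T_{k_2}$ is disjoint from $T_{k_1+k_2}$ within $I_{k_1 + k_2}$. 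Combined with the Freiman lower bound $|T_{k_1}+T_{k_2}| \geq s_{k_1} + s_{k_2} - 1$, and accounting for the part of $T_{k_1}+T_{k_2}$ that spills outside $I_{k_1+k_2}$ (which occurs precisely when $k_1 + k_2 > n + 2$), this yields a family of linear constraints on the $s_k$'s.

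The conclusion should then follow by taking an appropriate weighted combination of these constraints to certify $\sum_k s_k \leq 0.6 n^2 + O(n)$. Inspecting the big stripe, the constraints it saturates are those for pairs $(k_1, k_2) \in [0.8n, n]^2$ with $k_1 + k_2 \leq 2n$, so the LP dual certificate should be concentrated on these pairs. An alternative and possibly cleaner route is a compression argument: show that each $T_k$ may be replaced by a contiguous interval inside $I_k$ without decreasing $|S|$ or losing sum-freeness, thereby reducing the problem to a one-parameter optimisation over interval endpoints on each anti-diagonal, which is solved exactly by the big stripe.

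The hard part will be this final combinatorial/LP step. Cameron's original argument already gives $M([n]^2) \leq \tfrac{1}{\sqrt e} n^2 + O(n) \approx 0.6065\, n^2$, so sharpening the constant to exactly $0.6$ requires carefully tracking the spillover terms for $k_1 + k_2 > n+2$---precisely the regime relevant to the big stripe. Verifying that a compression preserves sum-freeness is similarly delicate, because shifting $T_k$ on one anti-diagonal can interact with sumsets landing on other anti-diagonals.
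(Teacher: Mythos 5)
There is a genuine gap: your argument for the upper bound is a plan, not a proof. (Note also that the paper you are working from does not prove this statement at all --- it imports it as Theorem 1.4 of Elsholtz--Rackham --- so your attempt has to stand on its own.) The lower bound via the big stripe is fine, and the reduction to anti-diagonals is a reasonable setup: writing $T_k$ for the first coordinates of $S$ on the line $x+y=k$, sum-freeness is indeed equivalent to $(T_{k_1}+T_{k_2})\cap T_{k_1+k_2}=\emptyset$ for all admissible pairs. But the step that actually yields the constant $0.6$ --- ``taking an appropriate weighted combination of these constraints'' or, alternatively, a compression to intervals --- is exactly where all the difficulty lives, and you do not carry it out. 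Worse, there is a real reason to doubt that the relaxation you propose is strong enough: once you replace each $T_k$ by its cardinality $s_k$ and retain only the consequences of Freiman's inequality $|T_{k_1}+T_{k_2}|\geq s_{k_1}+s_{k_2}-1$ together with disjointness and spillover bookkeeping, you have thrown away the positional structure of the $T_k$'s, and this kind of cardinality-only argument is essentially what underlies Cameron's weaker bound $\frac{1}{\sqrt e}n^2+O(n)$. Closing the gap from $0.6065$ to $0.6$ is precisely the content of the Elsholtz--Rackham theorem, and their proof needs substantially more than the linear constraints you list; you give no dual certificate, no candidate weights, and no verification that the LP optimum is $0.6n^2+O(n)$ rather than something larger.

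The compression alternative has the same status: you would need to exhibit a specific compression operation on a single anti-diagonal and prove it preserves sum-freeness globally, but shifting $T_k$ changes every sumset $T_k+T_{k'}$, which must avoid $T_{k+k'}$ on a different anti-diagonal, and no standard left/right compression obviously survives this interaction --- as you yourself concede. So both proposed routes end exactly at the point where the theorem's actual difficulty begins, and neither is executed; as written, the proposal establishes only the lower bound $M([n]^2)\geq 0.6n^2+O(n)$ and Cameron-type partial information in the other direction.
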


Stability of the above construction was recently established by Liu, Wang, Wilkes, and Yang \cite{liu2023shape}.

\begin{definition}
    A set $S\subseteq [n]^2$ is \emph{$\gamma$-close to the big stripe} if it lies entirely in the stripe $\{(x,y)\in\mathbb{R}^2:(0.8-\gamma)n\leq x+y \leq (1.6+\gamma)n\}$.
\end{definition}

\begin{theorem}[{\cite[Theorem 1.2]{liu2023shape}}]
\label{stability}
    For all $\gamma>0$, there exists $\delta>0$ and $n_0\in\mathbb{N}$ such that the following holds for all $n\geq n_0$.
    If $S\subseteq [n]^2$ is sum-free with $|S|\geq (0.6-\delta)n^2$, then $S$ is $\gamma$-close to the big stripe.
\end{theorem}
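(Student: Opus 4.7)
The plan is to prove the contrapositive: if $S\subseteq[n]^2$ is sum-free and contains a point outside the $\gamma$-enlargement of the big stripe, then $|S|<(0.6-\delta)n^2$ for some $\delta=\delta(\gamma)>0$. The first step is a \emph{weak} stability statement: for every $\epsilon>0$ there exists $\delta_1(\epsilon)>0$ such that $|S|\geq(0.6-\delta_1)n^2$ forces $|S\setminus(\text{big stripe})|\leq\epsilon n^2$. I would prove this by compactness. If it failed, extract a sequence $S_k\subseteq[n_k]^2$ of counterexamples, rescale to $[0,1]^2$, and pass the normalised counting measures $\mu_k\defeq n_k^{-2}\sum_{p\in S_k}\delta_{p/n_k}$ to a weak-$*$ subsequential limit $\mu$. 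The limit is absolutely continuous with density at most $1$, has total mass at least $0.6$, and its essential support is sum-free in the measure-theoretic sense. A continuous analogue of Theorem \ref{max2d}, together with a continuous equality case characterising extremisers, would then force $\mu$ to equal the indicator of the continuous big stripe $\{(x,y)\in[0,1]^2:0.8\leq x+y\leq 1.6\}$, contradicting $|S_k\setminus\text{stripe}|>\epsilon n_k^2$ for large $k$.

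The second step bootstraps weak stability to pointwise control via a shift/involution argument. Suppose $|S|\geq(0.6-\delta)n^2$ and $(a,b)\in S$ lies outside the $\gamma$-enlarged stripe. If $s_0\defeq a+b\leq(0.8-\gamma)n$ (the low case), sum-freeness gives $S\cap(S+(a,b))=\emptyset$, so inside the band
\[
\mathcal{B}\defeq\{(x,y)\in[a+1,n]\times[b+1,n]:0.8n+s_0\leq x+y\leq 1.6n\},
\]
which has area $\Omega(\gamma n^2)$ since $0.8n-s_0\geq\gamma n$, the intersections $S\cap\mathcal{B}$ and $(S+(a,b))\cap\mathcal{B}$ are disjoint subsets. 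But $\mathcal{B}$ sits inside both the big stripe and its translate by $(a,b)$, so weak stability applied with $\epsilon\ll\gamma$ forces each of $|S\cap\mathcal{B}|$ and $|(S+(a,b))\cap\mathcal{B}|$ to exceed $|\mathcal{B}|/2$, a contradiction. The high case $s_0\geq(1.6+\gamma)n$ is handled analogously using the involution $(x,y)\mapsto(a-x,b-y)$ on $[1,a-1]\times[1,b-1]$, under which $S$ is forced to be an antichain; the big stripe intersected with its $\psi$-image is again a band of area $\Omega(\gamma n^2)$, where weak stability forces $S$ to contain more than half the points, contradicting the antichain condition.

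The main obstacle is the continuous equality case underlying the first step: any absolutely continuous sum-free measure on $[0,1]^2$ of density at most $1$ and mass at least $0.6$ must be essentially the indicator of a single band of slope $-1$. The mass bound $\mu([0,1]^2)\leq 0.6$ follows by discretising $\mu$ on a fine grid and invoking Theorem \ref{max2d}, but pinning down the support as a specific band requires extra measure-theoretic input -- for instance the Lebesgue density theorem applied at a putative stray density point, then an infinitesimal version of the shift argument above to contradict the mass being $0.6$. Once this continuous stability is in hand, the discrete bootstrap in the second step is clean and yields an explicit $\delta(\gamma)>0$.
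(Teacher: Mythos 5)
This theorem is not proved in the paper at all: it is quoted verbatim from Liu, Wang, Wilkes and Yang \cite{liu2023shape}, so there is no internal proof to compare against; your attempt has to stand on its own. Its second half is fine and in fact mirrors the pairing arguments the paper itself uses in the proof of Proposition \ref{proximity}: given \emph{weak} stability (all but $\epsilon n^2$ of $S$ inside the stripe, hence $S$ covering all but $(\delta+\epsilon)n^2+O(n)$ of the stripe's $0.6n^2+O(n)$ points), the shift pairs $\{x,x+(a,b)\}$ and the involution pairs $\{x,(a,b)-x\}$ inside a band of area $\Omega(\gamma n^2)$ do force a contradiction. The problem is the first half, where the real content lies.

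The genuine gap is your compactness step: a weak-$*$ limit of (normalised) sum-free sets need \emph{not} have sum-free essential support, and the Schur-triple functional $\iint f(x)f(y)f(x+y)\,dx\,dy$ is not weak-$*$ continuous. The set $\{(x,y)\in[n]^2 : x+y \text{ odd}\}$ (or $\{x \text{ odd}\}$) is sum-free, yet its limit measure is $\tfrac12\cdot$Lebesgue on all of $[0,1]^2$, whose support is the whole square; this is the exact two-dimensional analogue of the odd numbers in the Cameron--Erd\H{o}s problem, which is why 1D stability has two extremal families. These examples have mass $\tfrac12<0.6$, so they do not contradict the theorem, but they do invalidate the inference ``limit of sum-free sets $\Rightarrow$ sum-free support,'' and with it the reduction of weak stability to a continuous extremal/equality problem. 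Ruling out such congruence-type, spread-out near-extremisers at density close to $0.6$ is precisely the hard discrete part of the Liu--Wang--Wilkes--Yang argument (and already of the Elsholtz--Rackham extremal theorem), and no soft limiting argument supplies it. So the gap you flagged yourself (the continuous equality case) is real but secondary; the argument breaks one step earlier, and weak stability would have to be proved by genuinely discrete means (e.g.\ removal-lemma/container structure plus fibre-wise use of one-dimensional stability), after which your bootstrap step would indeed finish the job.
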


Generalising the Cameron--Erd\H{o}s conjecture to two dimensions, Elsholtz and Rackham made the following conjecture on the number of sum-free subsets of $[n]^2$.

\begin{conj}[{\cite[Conjecture 2, $d=2$]{elsholtz2017maximal}}]
\label{elsholtzrackhamconjd=2}
    The number of sum-free subsets of $[n]^2$ is $2^{0.6n^2+O(n)}$.
\end{conj}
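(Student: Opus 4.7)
The lower bound $\sf([n]^2) \geq 2^{0.6n^2+O(n)}$ is immediate: by Theorem \ref{max2d} there is a sum-free set of size $0.6n^2+O(n)$ (for instance, the lattice points of the big stripe), and Observation \ref{obs1} then produces $2^{0.6n^2+O(n)}$ sum-free subsets of $[n]^2$.

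For the matching upper bound, the plan is to combine the hypergraph container method with the stability theorem of Liu, Wang, Wilkes, and Yang (Theorem \ref{stability}). First I would apply a container theorem to the Schur $3$-uniform hypergraph on $[n]^2$ whose edges are triples $\{a,b,a+b\}$ with $a,b,a+b\in [n]^2$. Standard machinery (Saxton--Thomason, or Balogh--Morris--Samotij) yields a family $\mathcal{C}$ of containers with $\log|\mathcal{C}| = o(n^2)$ such that every sum-free subset of $[n]^2$ is contained in some $C\in\mathcal{C}$, and each container has at most $M([n]^2)+o(n^2) = (0.6+o(1))n^2$ elements. Theorem \ref{stability}, applied to a maximum sum-free subset of each container, then splits $\mathcal{C}$ into two classes: either $M(C)\leq (0.6-\delta)n^2$ for some fixed $\delta>0$, or $C$ is $\gamma$-close to the big stripe for any prescribed $\gamma>0$.

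Containers of the first class contribute at most $|\mathcal{C}|\cdot 2^{(0.6-\delta)n^2} = 2^{(0.6-\delta+o(1))n^2}$ sum-free sets in total, which is comfortably absorbed into the error. Containers close to the big stripe form the dominant contribution and need careful handling: the big stripe itself has $0.6n^2+O(n)$ lattice points and all of its subsets are sum-free, which already achieves the target count on the nose. The remaining task is to show that the $O(\gamma n^2)$ lattice points of a $\gamma$-stripe lying outside the big stripe inflate the count by at most a factor of $2^{O(n)}$; equivalently, that almost every sum-free subset of a $\gamma$-stripe uses only $O(n)$ of these marginal points.

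This last task is the main obstacle: a direct $2^{|C|}$ bound loses $\Omega(\gamma n^2) \gg O(n)$ bits. The hope is to prove a 2D supersaturation estimate, namely that each marginal point (say with coordinate sum in $[(0.8-\gamma)n,\, 0.8n]$) lies in linearly many Schur triples with the bulk of the big stripe, so that including $\omega(n)$ such points forces a proportional loss of freedom in the big-stripe part of the sum-free set. Turning this local estimate into the required $2^{O(n)}$ excess factor would naturally be done by a subsidiary container or granular counting step on the margins, in the spirit of Sapozhenko's proof of the Cameron--Erd\H{o}s conjecture, or by a Fourier-analytic argument in the spirit of Green. A clean execution of either strategy in the 2D stripe geometry is the crux of the proof.
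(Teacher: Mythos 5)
Your lower bound and your first reduction (containers for the Schur hypergraph, removal to extract a large sum-free part, stability to split containers into a negligible class and a class close to the big stripe) follow the paper's Section~\ref{close} closely, and that part is essentially sound; the only wrinkle is that closeness of the container's sum-free core $B$ does not by itself confine the sum-free subsets of $A=B\cup C$ to the $\gamma$-stripe, since $C$ can be anywhere. The paper patches this with an explicit pairing argument (the regions $R_1,R_2$ in the proof of Proposition~\ref{proximity}): any set containing a point of coordinate sum below $0.7n$ or above $1.7n$ forfeits a constant fraction of the freedom on a positive-area subregion of the big stripe, so such sets number $o(2^{0.6n^2})$. This is a fixable omission in your write-up, not the main problem.

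The genuine gap is the step you yourself flag as the crux and leave as a ``hope''. Worse, the claim you propose to prove --- that almost every sum-free subset of the $\gamma$-stripe uses only $O(n)$ marginal points --- is false: taking all points with $x+y\in[(0.8-\gamma)n,(1.6-2\gamma)n)$, or more generally any subset of such a downward-shifted stripe together with the forfeited top sliver removed, gives roughly $2^{0.6n^2-o(n^2)}$ sum-free sets each using $\Theta(\gamma n^2)$ marginal points; the cost of using low points is paid at the top of the stripe at an exchange rate close to one, so no per-point supersaturation argument against the fixed big stripe can kill these configurations. What is actually needed, and what the paper does in Section~\ref{far} (Proposition~\ref{dircount}), is to bound $\sf(R_0)$ for the whole wide stripe $R_0=\{0.7n\leq x+y\leq 1.7n\}\cap[0,n]^2$ directly: decompose $R\cup L$ into Schur triples of one-dimensional fibers, Schur-embed each triple into $\{1,3,4\}\times[0.15n]_0$, prove the exponential-savings estimate $g(n)=O((2-\eps)^n)$ for the auxiliary convolution sum (Lemmas~\ref{bounding s by g}--\ref{calc2}), and then --- because the per-triple bound alone only yields $2^{0.6n^2+O(n\log n)}$ --- control the interaction between triples via the height profile $(m,M)$ and its discrepancy $D$: large discrepancy is exponentially penalised (Corollary~\ref{corollary large disc}) while small discrepancy forces $m$ to be nearly linear, so there are only $2^{\eps D+O(n)}$ profiles (Lemmas~\ref{disc2} and~\ref{lemma number of height profile}). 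None of this structure is present in your proposal, so the upper bound $2^{0.6n^2+O(n)}$ is not established by it.
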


\noindent The lower bound in Conjecture \ref{elsholtzrackhamconjd=2} follows from Theorem \ref{max2d} and Observation \ref{obs1}. We prove the following upper bound, settling Conjecture \ref{elsholtzrackhamconjd=2}.

\begin{theorem}
\label{mainthm}
    The number of sum-free subsets of $[n]^2$ is at most $2^{0.6n^2+O(n)}$.
\end{theorem}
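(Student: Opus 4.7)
Write $B \defeq \{(x,y) \in [n]^2 : 0.8n \leq x+y < 1.6n\}$ for the lattice points of the big stripe and, for small $\gamma>0$, $B^\gamma \defeq \{(x,y) \in [n]^2 : (0.8-\gamma)n \leq x+y \leq (1.6+\gamma)n\}$. My plan is to bound separately the number of sum-free $S\subseteq[n]^2$ with $S\subseteq B^\gamma$ and with $S\not\subseteq B^\gamma$. By Theorem~\ref{stability}, any $S$ of the latter type has $|S|<(0.6-\delta)n^2$ for some $\delta=\delta(\gamma)>0$.

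In the first case, write $S=S_0\sqcup T$ with $S_0\defeq S\cap B$ and $T\defeq S\setminus B$; for $t\in T$ let $k(t)\geq 1$ denote the coordinate-sum distance from $t$ to $B$. Since $B$ is itself sum-free---any two of its elements sum to coordinate sum $\geq 1.6n$ and hence leave $B$---every Schur triple inside $S$ uses an element of $T$, which translates into a pairwise constraint on $S_0$: for $\ell\in T$ below $B$ the constraint forbids all pairs $\{b,b+\ell\}\subseteq B$, and for $u\in T$ above $B$ it forbids all pairs $\{a,u-a\}\subseteq B$. Each such single-$t$ constraint graph has $\Theta(k(t)n)$ edges and maximum degree at most $2$, hence contains a matching of size $\Omega(k(t)n)$, so adding the element $t$ to $S$ cuts the count of admissible $S_0$ by a factor of $(3/4)^{\Omega(k(t)n)}$. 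The key refinement is that edges contributed by distinct $t\in T$ point in distinct directions and are therefore edge-disjoint; combining the per-element savings via this disjointness should yield an estimate of the form
\[
\#\{S\subseteq B^\gamma:\text{sum-free}\}\leq 2^{|B|}\prod_{t\in B^\gamma\setminus B}\bigl(1+2^{-cnk(t)}\bigr)\quad\text{for some constant }c>0.
\]
Each distance level $k\geq 1$ contains only $O(n)$ lattice points and $\sum_{k\geq 1}2^{-cnk}=O(2^{-cn})$, so the product is $1+o(1)$ and the count is at most $2^{|B|}(1+o(1))\leq 2^{0.6n^2+O(n)}$.

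In the second case, apply the hypergraph container theorem to the $3$-uniform Schur hypergraph on $[n]^2$, which has $\Theta(n^4)$ edges and maximum codegree $O(1)$ and thus satisfies the Saxton--Thomason hypotheses. This produces a family $\mathcal{C}$ of containers with $|\mathcal{C}|\leq 2^{o(n^2)}$ and each $|C|\leq(0.6+\epsilon)n^2$ (the size bound following from Theorem~\ref{max2d} and supersaturation within $C$). Choosing $\epsilon\ll\delta$, the entropy estimate $\binom{(0.6+\epsilon)n^2}{\leq(0.6-\delta)n^2}\leq 2^{(0.6-c')n^2}$ for some $c'>0$ gives a total contribution of $2^{(0.6-c')n^2+o(n^2)}$, negligible against the first case's bound.

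The main technical obstacle is establishing the product estimate in the first case: proving that savings from distinct elements of $T$ combine additively rather than bottlenecking at $\max_{t\in T}k(t)$, and verifying that the cross-constraints arising from Schur triples with two legs in $T$ (for instance $\ell_1+\ell_2=b$ with $\ell_1,\ell_2$ below $B$ and $b\in B$) only tighten rather than spoil the product. The container step in the second case is by comparison standard once Theorem~\ref{stability} is in hand.
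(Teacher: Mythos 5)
Your overall split (sum-free sets inside a slightly widened stripe versus those with a point outside it) matches the paper's, but both halves of your argument have problems, and the second one is the crux of the theorem. In the ``far'' case, the estimate $\binom{(0.6+\epsilon)n^2}{\leq(0.6-\delta)n^2}\leq 2^{(0.6-c')n^2}$ is false: since $\delta$ and $\epsilon$ are small, the cap $(0.6-\delta)n^2$ exceeds half of the container size $(0.6+\epsilon)n^2$, so the number of subsets of a container of size at most $(0.6-\delta)n^2$ is at least $2^{(0.6+\epsilon)n^2-1}$ (entropy bounds only bite when the cap is below half the ground set). Knowing that $S$ is slightly smaller than $0.6n^2$ buys nothing once the container itself has size only slightly above $0.6n^2$. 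The paper's Proposition \ref{proximity} gets its gain not from the cardinality restriction but from sum-freeness: given a point $(a,b)$ of $S$ with $a+b\leq 0.7n$ (or $\geq 1.7n$), it pairs up the $\Omega(n^2)$ lattice points of a region $R_1$ inside the big stripe into disjoint pairs $\{x,x-(a,b)\}$, of which $S$ may contain at most one each, yielding a factor $(3/4)^{\Omega(n^2)}$ against the $2^{(0.6+O(\gamma))n^2}$ trivial bound. Some mechanism of this kind is needed; your cardinality-plus-entropy step cannot be repaired by tuning $\epsilon$ and $\delta$.

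The ``close'' case is where the real content of Theorem \ref{mainthm} lies, and the estimate you leave as ``the main technical obstacle'' is not a verification gap but a false statement. Edge-disjointness of the constraint graphs coming from distinct $t\in T$ does not make the savings multiply, because these graphs share vertices heavily: for a star, which is an edge-disjoint union of $m$ single edges, the number of subsets containing no edge is about $2^{m}$, not $2^{m+1}(3/4)^m$. Concretely, take $T$ to be all lattice points with coordinate sum in $[0.79n,0.8n)$; then $\sum_{t\in T}k(t)n=\Theta(n^3)$, so your per-$T$ bound $2^{|B|}\prod_{t\in T}2^{-cnk(t)}$ is less than $1$, yet there are at least $2^{(0.6-0.16)n^2+O(n)}$ sum-free sets $S$ with $S\setminus B=T$ (take $S_0$ to be any subset of the points of $B$ with coordinate sum below $1.5n$). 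The savings genuinely bottleneck, which is exactly why the paper does something different: it reduces to the region $R\cup L$, decomposes it into Schur triples of one-dimensional fibers, bounds each triple via the exponentially decaying function $g$ (Lemmas \ref{bounding s by g}--\ref{calc2}), observes that the naive per-triple bound still loses a polynomial factor per triple ($s(n)=\Theta(n^2 2^{2n})$, giving only $2^{0.6n^2+O(n\log n)}$), and then removes that loss by fixing the height profile of the set and trading the number of profiles of given discrepancy $D$ (Lemma \ref{lemma number of height profile}) against the $2^{-\alpha D}$ saving of Corollary \ref{corollary large disc}. Your proposal contains no substitute for this interaction step, so as written it does not prove the theorem.
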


\noindent When $5$ divides $n$, one can check that the big stripe has $0.6n^2+0.6n-2$ lattice points inside. Therefore, by Observation \ref{obs1}, for general $n$, Theorem \ref{mainthm} is tight up to the implicit constant.

\subsection{Higher dimensions}

The extremal problem has also been raised in higher dimensions by Aydinian and Cameron \cite{cameronnote} and was highlighted recently by Green \cite[Problem 6]{green100open}. The following folklore conjecture asserts that the analogous construction in higher dimensions remains optimal.

\begin{conj}
\label{maxdd} The maximal size of a sum-free subset of $[n]^d$ is $M([n]^d)=(k_d+o(1))n^d$, where $k_d$ denotes the maximal volume of a region in the solid unit cube $[0,1]^d$ bounded between a hyperplane $H$ and its dilate $2\cdot H$.
\end{conj}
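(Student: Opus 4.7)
The plan is to establish both bounds in Conjecture \ref{maxdd} and to mirror, as far as possible, the story already developed for $d=2$. For the lower bound, I would pick a direction $a\in\mathbb{R}^d$ and threshold $t>0$ maximising the volume of the slab $\Sigma\defeq\{x\in[0,1]^d : t\leq\langle a,x\rangle\leq 2t\}$ (the optimum being $k_d$ by definition). The lattice points in $n\cdot\Sigma\subseteq[n]^d$ form a sum-free set: if $x+y=z$ with all three in the dilated slab then $\langle a,z\rangle\geq 2tn$ yet $\langle a,z\rangle\leq 2tn$, forcing equality everywhere, which is a measure-zero constraint losing only $O(n^{d-1})$ lattice points. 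A standard lattice point count for rational polytopes then gives $|n\Sigma\cap\mathbb{Z}^d|=k_d n^d+O(n^{d-1})$.

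For the upper bound, my first attempt would be a Freiman-type structural argument. Given a sum-free $S\subseteq[n]^d$ with $|S|\geq(k_d-\epsilon)n^d$, one has $(S+S)\cap S=\emptyset$ and $S\cup(S+S)\subseteq[2n]^d$, so $|S+S|\leq(2n)^d-|S|$ and the doubling constant $K\defeq|S+S|/|S|$ is bounded by a function of $d$ alone. Freiman's theorem in $\mathbb{Z}^d$ then yields a generalised arithmetic progression $P\supseteq S$ with $\dim(P)$ and $|P|/|S|$ bounded in terms of $K$. To conclude I would argue that, for such a $P$ to admit a sum-free subset of near-extremal density, $P$ must itself lie close to some hyperplane slab, because any extra independent direction in $P$ would generate too many Schur triples for a large sum-free $S\subseteq P$ to avoid; optimising over all slabs then recovers the bound $(k_d+o(1))n^d$.

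The hard step is the last one: extracting the hyperplane slab from the Freiman progression. The quantitative parameters in Freiman's theorem degrade rapidly with $d$, so the dimension of $P$ may be much larger than $d$, and cutting it back to a single-codimension slab requires a geometric stability argument --- essentially a high-dimensional analogue of Theorem \ref{stability}, which is currently only known in the plane. An alternative would be to iterate the compression/projection argument behind Theorem \ref{max2d} along coordinate hyperplanes, reducing $d$ to $d-1$; but controlling the error terms across $d-1$ rounds of projection, and showing convergence to the extremal slab rather than to some spurious union of lower-dimensional slabs, seems to require a genuinely new idea. I suspect it is this structural transition, rather than any routine combinatorial bookkeeping, that is the main reason Conjecture \ref{maxdd} remains open for $d\geq 3$.
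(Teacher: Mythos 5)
The statement you are addressing is labelled as a \emph{conjecture} in the paper: it is not proved there, and the paper explicitly records that it is only known for $d\in\{3,4,5\}$ (by Lepsveridze and Sun, with different methods), remaining open for $d\geq 6$. So there is no ``paper proof'' to compare against, and your proposal does not close the gap either --- as you yourself acknowledge. Your lower bound sketch is fine and standard: taking the half-open slab $\{x: t\leq \langle a,x\rangle < 2t\}$ (mirroring the big stripe) makes the set genuinely sum-free, and a lattice-point count gives $k_dn^d+O(n^{d-1})$; this direction was never the issue.

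The genuine gap is exactly where you place it, but it is worth being concrete about why the Freiman route cannot work as stated. The doubling bound you extract is only $K\leq 2^d/k_d$ (or thereabouts), which is a large constant depending on $d$; Freiman-type theorems in $\mathbb{Z}^d$ then produce a generalised arithmetic progression whose rank and size overhead degrade at least exponentially in $K$, so the container $P$ you obtain is far too lossy to pin down the sharp constant $k_d$ --- it can only give $M([n]^d)\leq C_d n^d$ for some unspecified $C_d$, which is already trivial. The step ``any extra independent direction in $P$ generates too many Schur triples'' is precisely the missing stability statement: in the plane this is Theorem \ref{stability} (Liu--Wang--Wilkes--Yang), and no analogue is known for $d\geq 3$; note also that even in $d\in\{3,4,5\}$ the extremal result was obtained by other means, not by Freiman structure theory. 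Likewise, iterating the compression/projection argument from the two-dimensional extremal proof loses constant-factor accuracy at each projection and cannot distinguish the optimal slab from competing configurations. In short: your write-up is an honest problem analysis rather than a proof, and the obstruction you identify (absence of a high-dimensional structural/stability theorem compatible with sharp constants) is indeed the reason the conjecture is open.
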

 
\noindent A stronger version of the conjecture would stipulate an error term of $O_d(n^{-1})$. In a recent breakthrough, Lepsveridze and Sun \cite{lepsveridze2023size} established Conjecture \ref{maxdd} for $d\in\{3,4,5\}$. For yet larger $d$, the problem remains open.

%Can insert upper and lower bounds

The Elsholtz--Rackham generalisation of the Cameron--Erd\H{o}s conjecture reads as follows.

\begin{conj}[{\cite[Conjecture 2]{elsholtz2017maximal}}]
\label{elsholtzrackhamconj}
    The number of sum-free subsets of $[n]^d$ is $2^{k_dn^d+O_d(n^{d-1})}$, where $k_d$ is as defined in the statement of Conjecture \ref{maxdd}.
\end{conj}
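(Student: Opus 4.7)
The plan is to mirror the strategy for Theorem \ref{mainthm} in general dimension, splitting sum-free subsets of $[n]^d$ by their size. The lower bound $2^{k_d n^d}$ follows from Conjecture \ref{maxdd} (in the dimensions where it is known) combined with Observation \ref{obs1}, so the task reduces to the matching upper bound. Fix small constants $\delta, \gamma > 0$ depending on $d$, and separately count sum-free sets $S \subseteq [n]^d$ that are \emph{large} ($|S| \geq (k_d - \delta)n^d$) and \emph{small} (the rest).

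For the large case, I would first aim to establish a higher-dimensional analogue of Theorem \ref{stability}: any sum-free $S \subseteq [n]^d$ with $|S| \geq (k_d - \delta)n^d$ must be $\gamma$-close to the slab bounded between an optimal hyperplane $H$ and its dilate $2H$, for some $H$ achieving $k_d$. Granting such stability, each large sum-free set lies inside one of only polynomially many candidate slabs, each containing $k_d n^d + O_d(n^{d-1})$ lattice points, which yields at most $2^{k_d n^d + O_d(n^{d-1})}$ large sum-free sets in total. For the small case, I would build a $d$-dimensional container theorem in the spirit of Sapozhenko and Green, applied to the Cayley sum-hypergraph on $[n]^d$. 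The goal is a family $\mathcal{C}$ of containers, each of size at most $(k_d - \delta/2) n^d$, with $\log_2 |\mathcal{C}| = O_d(n^{d-1})$, such that every sum-free subset of $[n]^d$ lies inside some member of $\mathcal{C}$; then the number of small sum-free sets is bounded by $|\mathcal{C}| \cdot 2^{(k_d - \delta/2)n^d}$, which is comfortably below the target.

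The principal obstacle is the higher-dimensional stability statement: for $d \geq 6$ even the extremal value $k_d$ is conjectural (Conjecture \ref{maxdd}), and stability is not currently known in any dimension $d \geq 3$. A secondary obstacle is the supersaturation input needed to drive the container method: one must quantitatively lower bound the number of Schur triples in sum-free $S \subseteq [n]^d$ that are not $\gamma$-close to any extremal slab. The two-dimensional argument can lean on Cauchy--Davenport-type facts and planar geometry; a genuinely $d$-dimensional replacement, perhaps via Pl\"unnecke--Ruzsa combined with a compactness argument over the space of hyperplanes in $\mathbb{R}^d$, looks like the delicate ingredient. Accordingly, I expect the proof to be conditional on Conjecture \ref{maxdd} together with its stability upgrade, and unconditional only for $d \in \{3,4,5\}$ modulo that stability upgrade.
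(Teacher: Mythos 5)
This statement is a conjecture that the paper does not prove: the paper establishes only the $d=2$ case (Theorem \ref{mainthm}) and the weaker Proposition \ref{propapproxcounting}, whose error term is $o(n^d)$ rather than $O_d(n^{d-1})$, and it explicitly notes that Conjecture \ref{elsholtzrackhamconj} remains out of reach for $d\geq 6$ and is only a ``possible approach'' for $d\in\{3,4,5\}$ via stability plus the methods of Section \ref{far}. Your proposal is honest about being conditional on Conjecture \ref{maxdd} and on a higher-dimensional stability theorem, which matches the paper's own assessment; but even granting those inputs, your argument has a genuine gap in the large case. Stability only yields that a large sum-free set is $\gamma$-close to some extremal slab, i.e.\ it lies in a slab thickened by $\gamma n$ on each side. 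Such a thickened slab contains $(k_d+\Theta_d(\gamma))n^d$ lattice points, not $k_dn^d+O_d(n^{d-1})$, so counting all of its subsets gives only $2^{(k_d+\Theta_d(\gamma))n^d}$ — a bound of the quality of Proposition \ref{propapproxcounting}, and nowhere near the claimed $O_d(n^{d-1})$ error, since $\gamma$ is a fixed constant that cannot be sent to $0$ with $n$. This is exactly the difficulty the paper confronts in $d=2$: after Proposition \ref{proximity} reduces to sets $0.1$-close to the big stripe, the entire content of Section \ref{far} (Proposition \ref{dircount}: the reduction to $R\cup L$, the decomposition into Schur triples of fibers, the bound on $g(n)$, and the height-profile/discrepancy analysis needed because $s(n)=\Theta(n^22^{2n})$ is too large) is devoted to counting sum-free subsets of the thickened stripe with only an $O(n)$ loss in the exponent. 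Your sketch contains no higher-dimensional analogue of this step, and it is the heart of the matter.

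A secondary point: for the small case you ask for a sharp container family with $\log_2|\mathcal{C}|=O_d(n^{d-1})$ and containers of size at most $(k_d-\delta/2)n^d$, which is a Green--Sapozhenko-type statement not currently available in any dimension $d\geq 2$; but it is also more than you need. Since the small case already loses a constant $\delta$ in the exponent, the crude container family of Lemma \ref{conts} (of size $2^{o(n^d)}$) combined with the removal lemma and stability suffices there, exactly as in the proof of Proposition \ref{proximity}. So your proposal over-engineers the step that is easy and leaves unaddressed the step that is hard.
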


\noindent We will establish the following weaker version of the conjecture. 

\begin{prop}
\label{propapproxcounting}
    The number of sum-free subsets of $[n]^d$ is $2^{M([n]^d)+o(n^d)}$.
\end{prop}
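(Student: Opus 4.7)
The lower bound follows directly from Observation \ref{obs1} applied to any maximum sum-free subset of $[n]^d$. For the upper bound, the plan is to combine the hypergraph container method with an arithmetic removal lemma for the equation $a+b=c$.

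Let $H$ be the $3$-uniform hypergraph on vertex set $[n]^d$ whose edges are the Schur triples $\{a,b,c\}$ with $a+b=c$ and $a\neq b$; the degenerate configurations $\{a,2a\}$ (arising from $a=b$) can be incorporated either as $2$-edges of a non-uniform hypergraph, or via the standard dummy-vertex encoding that reduces the problem to a $3$-uniform setting. Sum-free subsets of $[n]^d$ are precisely the independent sets of $H$. The main $3$-uniform part has $\Theta(n^{2d})$ edges, vertex degree $\Theta(n^d)$, and pair codegree at most $3$ (given distinct $a,b\in[n]^d$, the third vertex of any Schur triple through $\{a,b\}$ must lie in $\{a+b,a-b,b-a\}\cap[n]^d$). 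This is a favourable regime for the hypergraph container lemma of Saxton--Thomason and Balogh--Morris--Samotij.

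The argument then proceeds in three steps. \emph{Step 1 (Containers).} Applying the container lemma, for every $\eps>0$ one obtains a family $\mathcal{C}_\eps$ of subsets of $[n]^d$ with $|\mathcal{C}_\eps|=2^{o(n^d)}$, such that every sum-free subset of $[n]^d$ lies in some $C\in\mathcal{C}_\eps$, and each $C$ contains at most $\eps n^{2d}$ Schur triples (and at most $\eps n^d$ degenerate pairs). \emph{Step 2 (Supersaturation via removal).} Embedding $[n]^d\subset \mathbb{Z}_{3n}^d$ so that Schur triples in $[n]^d$ coincide with those in the ambient group, the arithmetic removal lemma of Kr\'al--Serra--Vena for the equation $x+y-z=0$ gives: any subset with at most $\eps n^{2d}$ Schur configurations can be rendered sum-free by deleting at most $\eta(\eps)n^d$ elements, with $\eta(\eps)\to 0$ as $\eps\to 0$. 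Consequently $|C|\leq M([n]^d)+\eta(\eps)n^d$ for every container. \emph{Step 3 (Counting).}
\[
\#\{S\subseteq[n]^d:S\text{ sum-free}\}\;\leq\;\sum_{C\in\mathcal{C}_\eps}2^{|C|}\;\leq\;|\mathcal{C}_\eps|\cdot 2^{M([n]^d)+\eta(\eps)n^d}\;=\;2^{M([n]^d)+(\eta(\eps)+o_n(1))n^d}.
\]
Letting $\eps\to 0$ slowly as $n\to\infty$ yields the desired bound $2^{M([n]^d)+o(n^d)}$.

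The principal obstacle is Step 2: the container lemma alone controls only the number of forbidden Schur configurations inside each container, not its size, so a supersaturation input is indispensable. The arithmetic removal lemma supplies this as a substantial black-box (typically proved via hypergraph regularity or Fourier-analytic methods in abelian groups), and its invocation here is what converts \emph{``$C$ has few Schur triples''} into \emph{``$C$ is almost sum-free, hence of size at most $M([n]^d)+o(n^d)$''}. The remaining work -- verifying the codegree conditions for the container lemma and handling the mild non-uniformity from degenerate Schur triples -- is routine.
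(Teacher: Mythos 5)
Your proposal is correct and follows essentially the same route as the paper: build a container family for the $3$-uniform Schur-triple hypergraph on $[n]^d$, use an arithmetic removal lemma (the paper invokes Green's removal lemma where you cite Kr\'al--Serra--Vena, which plays the identical role) to conclude each container has size at most $M([n]^d)+o(n^d)$, and then sum $2^{|C|}$ over the $2^{o(n^d)}$ containers. The extra care you take with degenerate triples $a+a=c$ is harmless but unnecessary, since sum-free sets are still independent in the hypergraph of distinct-element triples and the $O(n^d)$ degenerate solutions are negligible against the $\eps n^{2d}$ threshold.
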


\noindent The case $d=1$ of Proposition \ref{propapproxcounting} was established independently by Alon \cite{alon1991independent}, Calkin \cite{calkin1990number} and Erd\H{o}s and Granville (unpublished, see \cite{green2004cameron}). The proof of the proposition is nowadays a standard application of the hypergraph container method, developed independently by Saxton and Thomason \cite{saxton2015hypergraph}, and Balogh, Morris and Samotij \cite{balogh2015independent}.

Finally, we note that Conjecture \ref{elsholtzrackhamconj} remains completely out of reach for $d\geq 6$ until Conjecture \ref{maxdd} is proven. For $d\in\{3,4,5\}$, a possible approach towards resolving the Elsholtz--Rackham Conjecture is to establish the earlier mentioned stronger version of Conjecture \ref{maxdd} and stability in conjunction with the methods developed in this paper.

\subsection{Proof strategy and organisation}
We split Theorem \ref{mainthm} into two parts, Propositions \ref{proximity} and \ref{dircount}.

First, in section \ref{close}, we show that almost all sum-free subsets are close to the extremal example. This is done using the hypergraph container method. In particular, we use a container lemma in combination with Green's removal lemma \cite{green2005szemeredi} and the stability result. Along the way, we also deduce Proposition \ref{propapproxcounting} as a quick application of the machinery developed in this section.

Next, in section \ref{far}, we count the number of sum-free subsets close to the extremal example. We reduce the problem to bounding the number of sum-free subsets of a simpler region. We then divide the new region into triples of 1-dimensional fibers. First, we consider each triple separately to get a preliminary bound, and then improve this bound by considering interaction between triples.\newline

\noindent \textbf{Acknowledgements.}
The research leading to this work was carried out as a summer project in 2023 under the supervision of Yifan Jing and Akshat Mudgal supported by departmental funding from the Mathematical Institute, University of Oxford. The author is thankful to Yifan and Akshat for supervising the project and to the department for funding it. The author is also grateful to Oliver Riordan for being a supportive mentor and carefully reviewing drafts of the paper. The author thanks Mehtaab Sawhney for suggesting the inclusion of Proposition \ref{propapproxcounting} in the paper. Finally, the author also thanks Zach Hunter, Peter Keevash, Rob Morris, Ritesh Goenka, Ashwat Jain, and Reemon Spector for helpful discussions.

\section{Approximate typical structure}
\label{close}

In this section, our main aim is to prove the following proposition about the proximity of almost all sum-free sets to the big stripe. Along the way, we will also prove Proposition \ref{propapproxcounting}.

\begin{prop}
\label{proximity}
    The number of sum-free subsets of $[n]^2$ that are not $0.1$-close to the big stripe is $o(2^{0.6n^2})$.
\end{prop}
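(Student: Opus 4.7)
The plan is to run the standard hypergraph container + removal + stability pipeline. Let $H$ denote the $3$-uniform hypergraph on $[n]^2$ whose edges are Schur triples $\{a,b,c\}$ with $a+b=c$, so sum-free subsets of $[n]^2$ are precisely the independent sets of $H$. The hypergraph container lemma yields a family $\mathcal{C}$ of containers with $|\mathcal{C}|\leq 2^{o(n^2)}$, each $C\in\mathcal{C}$ containing only $o(n^4)$ Schur triples, and with every sum-free set contained in some $C\in\mathcal{C}$. Applying Green's arithmetic removal lemma to each such $C$ produces a sum-free subset $C^*\subseteq C$ with $|C\setminus C^*|=o(n^2)$; writing $R\defeq C\setminus C^*$ and combining with $M([n]^2)=0.6n^2+O(n)$ (Theorem \ref{max2d}), we get $|C|\leq 0.6n^2+o(n^2)$.

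Choose $\gamma<0.1$ and let $\delta>0$ be the threshold from Theorem \ref{stability} corresponding to this $\gamma$. Partition $\mathcal{C}$ into \emph{small} containers with $|C^*|\leq(0.6-\delta)n^2$ and \emph{large} containers with $|C^*|>(0.6-\delta)n^2$. The small containers together contribute at most $|\mathcal{C}|\cdot 2^{(0.6-\delta+o(1))n^2}=o(2^{0.6n^2})$ sum-free subsets, which suffices. For each large container, Theorem \ref{stability} forces $C^*$ to be $\gamma$-close to the big stripe, hence contained in the $0.1$-stripe. A sum-free $S\subseteq C$ that is not $0.1$-close must therefore contain some element of $R'\defeq\{q\in R:q\text{ lies outside the }0.1\text{-stripe}\}$, where $|R'|\leq|R|=o(n^2)$.

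The core estimate is that, for each fixed $p\in R'$ and each large container $C$, the number of sum-free $S\subseteq C$ with $p\in S$ is at most $2^{|C|-cn^2}$ for some absolute constant $c>0$. Since $p$ lies outside the $0.1$-stripe, either $p_x+p_y<0.7n$ or $p_x+p_y>1.7n$. In the first case, sum-freeness of $S\ni p$ forbids $\{q,q+p\}\subseteq S$ whenever $q,q+p\in C^*$; in the second, it forbids $\{q,p-q\}\subseteq S$ whenever $q,p-q\in C^*$. Hence $S\cap C^*$ is an independent set in a graph $G_p$ on $C^*$ whose edges are the corresponding translate or reflection pairs. A geometric estimate shows that $|E(G_p)|=\Omega(n^2)$ uniformly in $p\in R'$: intersecting the $\gamma$-stripe with its translate by $-p$ (in the first case) or its reflection through $p/2$ (in the second) yields an overlap region whose coordinate-sum range has width at least $0.1n+O(\gamma n)$ and whose intersection with $[n]^2$ has area $\Omega(n^2)$; since $|C^*|\geq(0.6-\delta)n^2$ misses at most $O((\delta+\gamma)n^2)$ points of the $\gamma$-stripe, $C^*$ contains $\Omega(n^2)$ of these pairs. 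Because $G_p$ has maximum degree at most $2$, its matching number is $\Omega(n^2)$, so it admits at most $2^{|C^*|}(3/4)^{\Omega(n^2)}=2^{|C^*|-cn^2}$ independent sets; multiplying by $2^{|R|}=2^{o(n^2)}$ choices for $S\cap R$ gives the claim. Summing over $p\in R'$ and the $2^{o(n^2)}$ large containers, and adding the small-container estimate, yields a total of $2^{(0.6-c+o(1))n^2}=o(2^{0.6n^2})$. The hardest step is the uniform lower bound on $|E(G_p)|$: $p$ near the boundary of the $0.1$-stripe shrinks the overlap sub-stripe, and $p$ near a corner of $[n]^2$ loses area to the constraint $q+p\in[n]^2$ or $p-q\in[n]^2$, but both regimes retain a positive fraction of $n^2$ pairs after a careful case analysis.
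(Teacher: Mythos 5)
Your overall pipeline is the same as the paper's: containers for Schur triples, Green's removal lemma, the stability theorem to force the sum-free core of each large container into a narrow stripe, and then a pairing/matching argument showing that any far-out element $p$ forbids $\Omega(n^2)$ pairs, yielding a $(3/4)^{\Omega(n^2)}$ saving that beats the $2^{o(n^2)}$ slack. The one place where your argument, as written, does not go through is the key estimate that $C^*$ contains $\Omega(n^2)$ of the forbidden pairs. You build the pairs inside $C^*$ itself, so you must subtract the points of the $\gamma$-stripe that $C^*$ misses, which you correctly bound by $O((\delta+\gamma)n^2)$; but you only impose $\gamma<0.1$. In the worst case (e.g.\ $p$ with $p_x+p_y$ just below $0.7n$) the overlap of the $\gamma$-stripe with its translate by $-p$, after intersecting with the constraints $q,q+p\in[n]^2$, has area only a few hundredths of $n^2$, while $2(2\gamma+\delta)n^2$ can be an order of magnitude larger when $\gamma$ is close to $0.1$ — so the inference ``misses $O((\delta+\gamma)n^2)$ points, hence contains $\Omega(n^2)$ pairs'' fails for an adversarial $C^*$ whose missing points sit exactly on the overlap region. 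The gap is quantitative rather than conceptual and is easily repaired: fix $\gamma$ (and, without loss of generality, $\delta\leq\gamma$, since shrinking $\delta$ preserves the stability conclusion) small enough compared with an explicit lower bound on the overlap area, and only then choose the removal-lemma parameter $\beta$ small compared with the resulting exponential gain; this ordering of quantifiers must be stated, since your $c$ depends on $\gamma,\delta$ and your $|C\setminus C^*|=o(n^2)$ really means ``at most $\beta n^2$ for a $\beta$ chosen after $c$''.

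It is worth noting how the paper sidesteps both the case analysis you flag as the hardest step and the sensitivity to $\gamma,\delta$: instead of pairing points of $C^*$, it pairs lattice points of a single fixed region — $R_1=\{x,y\geq 0.7n,\ 1.5n\leq x+y\leq 1.6n\}$ for low $p$, and an analogous $R_2$ for high $p$ — chosen so that both $R_1$ and $R_1-p$ lie inside the big stripe for \emph{every} admissible bad point $p$. Each pair then costs a factor $3/4$ against the full count $2^{\lambda(\text{big stripe})}$, and the points of the container outside the stripe are paid for crudely by a factor $2^{(2\gamma+\beta)n^2}$; the only smallness condition is then the transparent inequality $2\gamma+\beta<\tfrac{3}{400}\log_2(4/3)$, with no dependence on how much of the stripe $C^*$ occupies and no corner-by-corner geometry. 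If you keep your version, you should carry out the uniform area lower bound explicitly (concavity of cross-section lengths of the convex overlap region makes this short), and make the choice of $\gamma,\delta,\beta$ explicit in the order described above.
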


\subsection{Containers}

A family of sets so that any sum-free set is a subset of some set in the family is referred to as a family of \textit{containers} for sum-free sets. We will use a hypergraph container theorem to construct a suitable family of containers. But first, we must recall some terminology about hypergraphs.

A $r$-uniform hypergraph $\mathcal{H}$ consists of the set of its \textit{vertices} $V(\mathcal{H})$ and a set of \textit{edges} $E(\mathcal{H})$ so that each edge is a set of of $r$ distinct vertices, that is, $E(\mathcal{H})\subseteq\binom{V(\mathcal{H})}{r}$. Let $v(\mathcal{H})=|V(\mathcal{H})|$ and $e(\mathcal{H})=|E(\mathcal{H})|$ denote the number of vertices and edges respectively in $\mathcal{H}$. For $v\in V(\mathcal{H})$, its \textit{degree} $d(v)$ is the number of edges containing $v$. Therefore, the average degree of $\mathcal{H}$ is $\frac{r\cdot e(\mathcal{H})}{v(\mathcal{H})}$.

Let $\mathcal{H}$ be a $r$-uniform hypergraph with average degree $\bar{d}$. For every $S \subseteq V(\mathcal{H})$, its \textit{co-degree} $d(S)$ is the number of edges in $\mathcal{H}$ containing $S$, that is, $d(S)=|\{e\in E(\mathcal{H}):S\subseteq e\}|$. For every $j \in [r]$, denote by $\Delta_j$ the $j$-th maximum co-degree, that is, $\Delta_j=\max\{d(S):S\subseteq V(\mathcal{H}), |S|=j\}$. For $\tau \in (0,1)$, define $\Delta(\mathcal{H},\tau)=2^{\binom{r}{2}-1}\sum_{j=2}^r\frac{\Delta_j}{\bar{d}\tau^{j-1}2^{\binom{j-1}{2}}}$. In particular, when $r=3$,
$$\Delta(\mathcal{H},\tau)=\frac{4\Delta_2}{\bar{d}\tau}+\frac{2\Delta_3}{\bar{d}\tau^2}.$$

We will use the following version of the hypergraph container theorem, stated in this form in \cite{balogh2018number} but originally from \cite{saxton2015hypergraph}.

\begin{lemma}
\label{contheo}
    Let $\mathcal{H}$ be an $r$-uniform hypergraph with vertex set $[N]$. Let $0<\epsilon, \tau<1/2$. Suppose that $\tau<1/(200\cdot r \cdot r!^2)$ and $\Delta(\mathcal{H},\tau)\leq \epsilon/(12r!)$. Then there exists $c = c(r) \leq 1000 \cdot r\cdot r!^3$ and a collection $\mathcal{C}$ of vertex subsets such that
    \begin{enumerate}[label=(\roman*)]
        \item every independent set in $\mathcal{H}$ is a subset of some $A \in \mathcal{C}$;
        \item for every $A \in \mathcal{C}$, $e(\mathcal{H}[A]) \leq \epsilon \cdot e(\mathcal{H})$;
        \item $\log |\mathcal{C}|\leq cN\tau \cdot \log (1/\epsilon)\cdot\log(1/\tau)$.
    \end{enumerate}
\end{lemma}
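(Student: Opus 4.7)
The plan is to prove the container theorem via the algorithmic scheme of Saxton and Thomason. The goal is to construct, for each independent set $I$ in $\mathcal{H}$, a small \emph{fingerprint} $F(I) \subseteq I$ of size at most roughly $\tau N$, together with a deterministic reconstruction map $F \mapsto A(F) \subseteq V(\mathcal{H})$ such that $I \subseteq A(F(I))$. Taking $\mathcal{C} = \{A(F) : |F| \leq \tau N\}$ then immediately gives the bound in (iii), since there are at most $\binom{N}{\leq \tau N}$ possible fingerprints and so $\log |\mathcal{C}| \leq O(N\tau \log(1/\tau))$; conclusion (i) is automatic from the containment property.

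The fingerprint is built by a greedy deletion process. Fix a canonical total order on $V(\mathcal{H})$ and maintain a pair $(F_t, A_t)$, initialised as $(F_0, A_0) = (\emptyset, V(\mathcal{H}))$. At each step, identify the vertex $v \in A_t$ of maximum weighted degree in the induced hypergraph $\mathcal{H}[A_t]$, where the weighting penalises edges whose higher co-degrees $\Delta_j$ are large. Then branch: if $v \in I$, set $F_{t+1} = F_t \cup \{v\}$ and $A_{t+1} = A_t$; otherwise set $F_{t+1} = F_t$ and $A_{t+1} = A_t \setminus \{v\}$. Terminate as soon as $e(\mathcal{H}[A_t]) \leq \epsilon \cdot e(\mathcal{H})$ and declare $A(F) = A_t$. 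Crucially, the only input-dependent decision is whether $v \in I$, so the reconstruction map depends on $F$ alone; the resulting $A$ automatically satisfies (ii) by the stopping rule.

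The main technical obstacle is showing that the process terminates with $|F_t| \leq \tau N$. One must argue that the hypothesis $\Delta(\mathcal{H},\tau) \leq \epsilon/(12 r!)$ forces a constant-fraction drop in $e(\mathcal{H}[A_t])$ within roughly $\tau N$ greedy steps. This is where the sum $\sum_{j=2}^r \Delta_j/(\bar d\, \tau^{j-1} 2^{\binom{j-1}{2}})$ appears naturally: the $j$-th term captures the expected number of edges killed per step via a length-$j$ co-degree witness, and the exponents $\binom{j-1}{2}$ and $\binom{r}{2}-1$ arise from iterating an AM--GM- or Jensen-type inequality $r-1$ times across the co-degree hierarchy. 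The condition $\tau < 1/(200 \cdot r \cdot r!^2)$ is needed to absorb lower-order error terms in this iteration.

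To execute the plan, I would first work out the $r=3$ case completely, since the explicit expression $\Delta(\mathcal{H},\tau) = \tfrac{4\Delta_2}{\bar d \tau} + \tfrac{2\Delta_3}{\bar d\tau^2}$ gives a clean testing ground for getting the constants right and for verifying the termination bound using a martingale-style analysis of the residual edge count. Once that is calibrated, I would lift the argument to general $r$ by induction on the co-degree level, the induction step amounting to writing the telescoping weight as a product over co-degree levels and repeatedly applying the degree-concentration inequality from the base case. The constant $c = c(r) \leq 1000 \cdot r \cdot r!^3$ then falls out as the composition of the per-level constants, with the extra factor $\log(1/\epsilon)$ in (iii) accounting for repeated application of the procedure to drive the edge density below $\epsilon$.
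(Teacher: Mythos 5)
You should first note that the paper does not prove Lemma \ref{contheo} at all: it is imported as a black box, ``stated in this form in \cite{balogh2018number} but originally from \cite{saxton2015hypergraph}'', and the specific constants ($\epsilon/(12r!)$, $\tau<1/(200\,r\,r!^2)$, $c(r)\leq 1000\,r\,r!^3$, the weights $2^{\binom{j-1}{2}}$ in $\Delta(\mathcal{H},\tau)$) are exactly those of the cited statement. So there is no internal proof to match; any honest proof here means reconstructing the Saxton--Thomason argument with these explicit constants.

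Measured against that task, your proposal is a plan rather than a proof, and the missing part is precisely the heart of the theorem. The counting step you do carry out (fingerprints of size at most $\tau N$ give $\log|\mathcal{C}|=O(N\tau\log(1/\tau))$, and reconstruction depending only on the fingerprint gives (i), while the stopping rule gives (ii)) is the easy, formal part. What is not supplied is the argument that the branching process actually reaches $e(\mathcal{H}[A_t])\leq\epsilon\, e(\mathcal{H})$ before the fingerprint exceeds $\tau N$, under the hypothesis $\Delta(\mathcal{H},\tau)\leq\epsilon/(12r!)$ --- you explicitly defer it (``I would first work out the $r=3$ case completely'', ``lift the argument to general $r$ by induction''), and phrases like ``martingale-style analysis'' or ``iterating an AM--GM- or Jensen-type inequality across the co-degree hierarchy'' are gestures, not estimates. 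Moreover, the mechanism you describe --- a single greedy pass removing the maximum-(weighted-)degree vertex of the $r$-uniform hypergraph --- is not how the general-$r$ theorem is proved and is not obviously sufficient for $r\geq 3$: in \cite{saxton2015hypergraph} the construction is recursive, passing to $(r-1)$-uniform link-type hypergraphs, and the quantities $\Delta_j/\bar d\,\tau^{j-1}2^{\binom{j-1}{2}}$, the threshold $\epsilon/(12r!)$ and the constant $1000\,r\,r!^3$ are the bookkeeping output of that recursion (comparing the degree measure captured by the container with the fingerprint size level by level). Without that quantitative core --- or some substitute for it --- the proposal does not establish the lemma, and in particular gives no reason why the stated constants, which the paper's later application in Lemma \ref{conts} relies on only loosely, would emerge. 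For the purposes of this paper, citing the result, as the author does, is the appropriate move; if you do want to prove it, the part you have labelled ``the main technical obstacle'' is the entire content.
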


We want a small family of containers with each container being \enquote{almost} sum-free, that is, each container has few \textit{Schur triples}.

\begin{definition}
    A Schur triple is a solution $(a,b,c)$ to $a+b=c$.
\end{definition}

We are now ready to construct our containers.

\begin{lemma}
\label{conts}
    For all  $d\in\mathbb{N}$ and $\epsilon\in(0,1)$, there exists $n_0\in\mathbb{N}$ such that the following holds for all $n\geq n_0$. There exists a family $\mathcal{C}$ of subsets of $[n]^d$ with the following properties.
    \begin{enumerate}[label=(\roman*)]
        \item \label{contitem1} If $S\subseteq [n]^d$ is sum-free, then $S$ is contained in some member of $\mathcal{C}$.
        \item \label{contitem2}Every member of $\mathcal{C}$ has at most $\epsilon n^{2d}$ Schur triples.
        \item \label{contitem3}$|\mathcal{C}|=2^{o(n^d)}$.
    \end{enumerate}
\end{lemma}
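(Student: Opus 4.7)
The plan is to apply Lemma \ref{contheo} to the $3$-uniform hypergraph $\mathcal{H}$ on vertex set $[n]^d$ whose edges are the $3$-element subsets $\{a,b,c\} \subseteq [n]^d$ such that one of $a+b=c$, $a+c=b$, or $b+c=a$ holds. Every sum-free subset of $[n]^d$ is an independent set in $\mathcal{H}$, so property (i) of the container theorem will directly translate into property \ref{contitem1} of the lemma.

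For the parameters, $N = n^d$ and the number of edges is $e(\mathcal{H}) = \Theta(n^{2d})$: each edge corresponds to exactly two ordered triples $(a,b,a+b)$ with $a\ne b$ and $a+b\in[n]^d$, and the number of such ordered triples is of order $n^{2d}$. Hence the average degree is $\bar d = 3e(\mathcal{H})/N = \Theta(n^d)$. For any pair $\{u,v\}$, an edge $\{u,v,w\}$ forces $w \in \{u+v,\; u-v,\; v-u\}$, so $\Delta_2 \le 3$; and $\Delta_3 \le 1$ trivially. I would then take $\tau = C(\epsilon)\, n^{-d/2}$ with $C$ chosen large in terms of $\epsilon$, so that
\[
\Delta(\mathcal{H}, \tau) \;=\; \frac{4\Delta_2}{\bar d \,\tau} + \frac{2\Delta_3}{\bar d \,\tau^2} \;=\; O\!\left(n^{-d/2}\right) + O\!\left(C^{-2}\right) \;\le\; \frac{\epsilon}{12 \cdot 3!}
\]
for $n$ sufficiently large. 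Lemma \ref{contheo} then produces a family $\mathcal{C}$ with
\[
\log|\mathcal{C}| \;\le\; c\, N\tau\,\log(1/\epsilon)\log(1/\tau) \;=\; O\!\left(n^{d/2}\log n\right) \;=\; o(n^d),
\]
which is property \ref{contitem3}.

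It remains to upgrade the container theorem's conclusion $e(\mathcal{H}[A]) \le \epsilon'\, e(\mathcal{H})$ (for suitably small $\epsilon'$) to a bound on the number of ordered Schur triples inside each container $A$. A Schur triple $(a,b,c)$ with $a,b,c$ all distinct arises from a unique edge of $\mathcal{H}[A]$, and each such edge contributes at most $2$ ordered triples (in $[n]^d$ at most one of $a+b=c$, $a+c=b$, $b+c=a$ can hold, since the other two relations force a coordinate to vanish); while the degenerate triples with $a=b$ are in bijection with those $a \in A$ satisfying $2a \in A$, numbering at most $n^d$. Taking $\epsilon'$ to be a sufficiently small constant multiple of $\epsilon$ therefore yields at most $\epsilon n^{2d}$ Schur triples per container for large $n$, establishing property \ref{contitem2}.

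I do not expect a serious obstacle here: this is a textbook container set-up once the hypergraph is specified, and indeed the paper already notes that Proposition \ref{propapproxcounting} is a standard application. The only mild care required is reconciling the definition of a Schur triple used in the paper (ordered, possibly with $a=b$) with the unordered $3$-element edges of $\mathcal{H}$, which the counting above handles.
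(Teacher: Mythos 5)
Your proposal is correct and follows essentially the same route as the paper: the same Schur-triple hypergraph on $[n]^d$ fed into Lemma \ref{contheo}, with only cosmetic differences (you take $\tau=C(\epsilon)n^{-d/2}$ where the paper takes $\tau=n^{-d/4}$, both giving $\log|\mathcal{C}|=o(n^d)$). Your explicit bookkeeping converting unordered edges of $\mathcal{H}[A]$ into ordered, possibly degenerate Schur triples is a point the paper passes over silently, and it is handled correctly.
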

\begin{proof}
    Let $\mathcal{H}$ be the $3$-uniform hypergraph encoding Schur triples in $[n]^d$, that is, let $V(\mathcal{H})=[n]^d$ and $E(\mathcal{H})=\{\{a,b,c\}\in \binom{V(\mathcal{H})}{3}:a+b=c\}$. The average degree $\bar{d}$ of $\mathcal{H}$ is $\bar{d}=\frac{3e(\mathcal{H})}{v(\mathcal{H})}=\Theta(n^d)$ and $\Delta_2(\mathcal{H})=\Delta_3(\mathcal{H})=O(1)$. Let $\tau=n^{-\frac{d}{4}}$. Then, $\Delta(\mathcal{H}, \tau)=O(n^{-\frac{d}{2}})$. We can now apply Lemma \ref{contheo} to the hypergraph $\mathcal{H}$. For $n$ large enough, in terms of $\epsilon$, the hypotheses of the lemma hold and $(i)$, $(ii)$ and $(iii)$ follow from their respective counterparts in Lemma \ref{contheo}, noting that $e(\mathcal{H})\leq n^{2d}$ and $c(3)n^{d}\tau \cdot \log (1/\epsilon)\cdot\log(1/\tau)=O(n^\frac{3d}{4}\log n)=o(n^{d})$.
\end{proof}

\subsection{Green's Removal Lemma}

We begin by stating the following lemma about almost sum-free sets in $[n]^d$. It follows immediately from Green's Removal Lemma \cite[Theorem 1.5]{green2005szemeredi}, by taking $G=(\mathbb{Z}/2n\mathbb{Z})^d$, $k=3$, $A_1=A_2=A$ and $A_3=-A$ (See the proof of Corollary 1.6 in \cite{green2005szemeredi}).

\begin{lemma}[\cite{green2005szemeredi}]
\label{removal}
    For all $d\in \mathbb{N}$ and $\beta>0$, there exists $\epsilon>0$ and $n_0\in\mathbb{N}$ such that the following holds for all $n\geq n_0$.
    Suppose $A\subseteq [n]^d$ has at most $\epsilon n^{2d}$ Schur triples. Then, $A=B\cup C$, where $B$ is sum-free and $|C|\leq \beta n^d$.
\end{lemma}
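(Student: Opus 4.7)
The plan is to deduce the statement as a direct corollary of Green's arithmetic removal lemma (\cite[Theorem 1.5]{green2005szemeredi}), applied in the finite abelian group $G \defeq (\mathbb{Z}/2n\mathbb{Z})^d$. The crucial ingredient is the natural embedding $[n]^d \hookrightarrow G$, which preserves Schur triples: for $a, b, c \in [n]^d$, the equation $a + b = c$ holds in $\mathbb{Z}^d$ if and only if it holds in $G$, since each coordinate of $a + b$ lies in $\{2, \dots, 2n\}$, a complete residue system modulo $2n$, so no modular wraparound can manufacture spurious solutions. Under this embedding, the Schur triples in $A \subseteq [n]^d$ are in bijection with the solutions $(a_1, a_2, a_3) \in A \times A \times (-A)$ to the translation-invariant equation $a_1 + a_2 + a_3 = 0$ in $G$.

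Given $\beta > 0$, I would invoke Green's removal lemma with $k = 3$, sets $A_1 = A_2 = A$ and $A_3 = -A$, and output parameter $\beta' \defeq \beta/(3 \cdot 2^d)$. This provides a threshold $\epsilon' > 0$ and guarantees that if the number of solutions in $G$ is at most $\epsilon'|G|^{2}$, then there exist $C_i \subseteq A_i$ with $|C_i| \leq \beta'|G|$ such that no solutions survive on $(A_1 \setminus C_1) \times (A_2 \setminus C_2) \times (A_3 \setminus C_3)$. Setting $\epsilon \defeq \epsilon'$ suffices: since $|G|^{2} = (2n)^{2d} \geq n^{2d}$, the hypothesis that $A$ has at most $\epsilon n^{2d}$ Schur triples implies the hypothesis of Green's lemma. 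Taking $B \defeq A \setminus (C_1 \cup C_2 \cup (-C_3))$ then yields a sum-free subset of $A$ (any Schur triple $(a, b, c)$ in $B$ would give a forbidden solution $(a, b, -c)$), while $|A \setminus B| \leq |C_1| + |C_2| + |-C_3| \leq 3 \beta' |G| = \beta n^d$, which is exactly the required decomposition with $C \defeq A \setminus B$.

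I do not anticipate a significant obstacle; the argument is essentially bookkeeping to verify that Green's translation-invariant removal lemma applies cleanly in our setting. The only mildly subtle point is the choice of modulus $2n$ rather than $n$, which is precisely what prevents modular wraparound from creating spurious additive relations that do not come from genuine Schur triples in $\mathbb{Z}^d$. Once the embedding is fixed correctly, the remainder of the proof is a direct translation of the hypothesis and conclusion between the integer and group settings, with $n_0$ absorbing the dependence on $d$ and $\beta$ coming from Green's lemma.
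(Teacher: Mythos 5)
Your proposal is correct and follows exactly the route the paper indicates: applying Green's removal lemma (Theorem 1.5 of \cite{green2005szemeredi}) in $G=(\mathbb{Z}/2n\mathbb{Z})^d$ with $k=3$, $A_1=A_2=A$, $A_3=-A$, where the modulus $2n$ rules out wraparound. You simply spell out the bookkeeping (choice of $\beta'$, size comparison $n^{2d}\leq (2n)^{2d}$, and the sum-freeness of $B$) that the paper leaves implicit.
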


We are now ready to prove Proposition \ref{propapproxcounting}.

\begin{proof}[Proof of Proposition \ref{propapproxcounting}]
    Let $\beta>0$ and let $\eps=\min(\frac{1}{2},\eps(\beta))$, where $\eps(\beta)$ is given by Lemma \ref{removal}. Apply Lemma \ref{conts} with parameter $\eps$ to get a family of containers $\mathcal{C}$. By property \ref{contitem2} of the containers and Lemma \ref{removal}, for $n\geq n_0(\beta)$, every $A\in \mathcal{C}$ can be written as $A=B\cup C$, where $B$ is sum-free and $|C|\leq \beta n^d$. As $B$ is sum-free, $|B|\leq M([n]^d)$ and so $|A|\leq M([n]^d)+\beta n^d$. By property \ref{contitem1} of the containers, for $n\geq n_0(\beta)$, the number of sum-free subsets of $[n]^d$ is at most $|\mathcal{C}|\max_{A\in\mathcal{C}}2^{|A|}$ which is at most $2^{M([n]^d)+\beta n^d+o(n^d)}$, by property \ref{contitem3} and the previous discussion. As $\beta>0$ was arbitrary, we are done.
\end{proof}

\subsection{Proof of Proposition \ref{proximity}}
\label{subsection proof of close}

For a region $R\subseteq \mathbb R^2$, let $\Lambda (R)\defeq\mathbb Z^2\cap R$ denote the set of lattice points in $R$ and let $\lambda(R)\defeq|\Lambda(R)|$ denote their number. The following lemma, a corollary of Lemma 2.1 in \cite{liu2023shape}, allows us to use the area to count lattice points. We shall make repeated use of it in this section, often without explicit reference.

\begin{lemma}
\label{estarea}
    For a region $R\subseteq [-2n,2n]^2$ which is a union of $O(1)$ convex polygons each with a finite number of sides, $\lambda(R)=\area(R)+O(n)$.
\end{lemma}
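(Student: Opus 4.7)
The plan is to reduce Lemma \ref{estarea} to the single-convex-polygon case and then handle that case by a standard boundary-versus-bulk argument. Write $R=\bigcup_{i=1}^{k}P_i$ with $k=O(1)$ and each $P_i$ a convex polygon with a bounded number of sides contained in $[-2n,2n]^2$. By inclusion--exclusion,
$$\lambda(R)=\sum_{\emptyset\neq I\subseteq[k]}(-1)^{|I|+1}\lambda\!\Bigl(\bigcap_{i\in I}P_i\Bigr),\qquad \area(R)=\sum_{\emptyset\neq I\subseteq[k]}(-1)^{|I|+1}\area\!\Bigl(\bigcap_{i\in I}P_i\Bigr).$$
Each intersection $\bigcap_{i\in I}P_i$ is itself a convex polygon with a bounded number of sides (intersection of finitely many convex polygons is convex and has a bounded number of sides), contained in $[-2n,2n]^2$. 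Since there are only $2^{k}-1=O(1)$ terms, it suffices to prove $\lambda(P)=\area(P)+O(n)$ for any single such convex polygon $P$, which is exactly the content of Lemma 2.1 in \cite{liu2023shape}, so I would simply cite it.

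If I had to reprove the single-polygon statement, I would argue as follows. Associate each lattice point $v\in\Lambda(P)$ with the unit square $Q_v\defeq v+[0,1)^2$. The union of these squares has area exactly $\lambda(P)$, so the discrepancy $|\lambda(P)-\area(P)|$ is controlled by the symmetric difference between $P$ and $\bigcup_{v\in\Lambda(P)}Q_v$. This symmetric difference is contained in a $\sqrt{2}$-neighbourhood of the boundary $\partial P$. Because $P$ is a convex polygon contained in $[-2n,2n]^2$, its perimeter is at most the perimeter of the bounding box, namely $16n=O(n)$. Thus the $\sqrt{2}$-neighbourhood of $\partial P$ has area at most $O(n)$, and the discrepancy is $O(n)$ as required.

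The main (mild) obstacle is the convexity step: for a non-convex region the perimeter need not be bounded by the bounding box perimeter. Convexity (together with containment in $[-2n,2n]^2$) is exactly what forces the perimeter to be $O(n)$, and the assumption of finitely many sides ensures that vertices of $P$ contribute only $O(1)$ extra error. Once this is in hand, the reduction via inclusion--exclusion, with $O(1)$ subsets $I$, costs only an $O(1)$ multiplicative blow-up in the implicit constant, yielding the claimed bound $\lambda(R)=\area(R)+O(n)$.
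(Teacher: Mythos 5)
Your proposal is correct and takes essentially the same route as the paper, which derives the lemma directly as a corollary of Lemma 2.1 in \cite{liu2023shape}; your inclusion--exclusion reduction over the $2^k-1=O(1)$ intersections (each again a convex polygon, possibly degenerate with zero area and only $O(n)$ lattice points) is exactly the implicit step, and your boundary-tube argument is a valid self-contained substitute for the cited single-polygon estimate. One cosmetic remark: convexity alone already gives perimeter at most $16n$, so you do not need the intersections to have a \emph{bounded} number of sides, only finitely many.
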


We are now ready to prove the main proposition of this section.

\begin{proof}[Proof of Proposition \ref{proximity}]
    Let $\gamma=\frac{1}{800}\cdot\log_2(\frac{4}{3})$ and let $\delta=\delta(\gamma)>0$ be given by Theorem \ref{stability}. Now, fix $\beta=\min(\frac{\delta}{2},\gamma)$ and let $\epsilon=\min(\frac{1}{2},\epsilon(\beta))$ where $\epsilon(\beta)$ is given by Lemma \ref{removal}. We can assume that $n$ is larger than $n_0$ stipulated in any of the hypotheses of Lemmas \ref{stability}, \ref{removal} and \ref{conts}.
    
    We apply Lemma \ref{conts} to get a family of containers $\mathcal{C}$. By the removal lemma, for $A\in \mathcal{C}$, $A=B\cup C$, where $B$ is sum-free and $|C|\leq \beta n^2$. Call the container $A$ \emph{small} if $|B|\leq (0.6-\delta)n^2$, and \emph{large} otherwise.

    We count the number of sum-free sets described by small containers first. A small container $A$ has at most $2^{|A|}\leq 2^{(0.6-\delta+\beta)n^2}$ sum-free subsets. Therefore, the number of sum-free subsets described by small containers is at most $2^{(0.6-\delta+\beta+o(1))n^2}=o(2^{0.6n^2})$.
    
    Now, let $A$ be a large container. By the stability result, $B$ is $\gamma$-close to the big stripe.
    
    Suppose a subset $S$ of $A$ contains a point $(a,b)$ satisfying $a+b\leq 0.7n$. Let
    $$R_1=\{(x,y)\in \mathbb{R}^2:x,y\geq 0.7n, 1.5n\leq x+y\leq1.6n\}.$$ Note that $R_1$ has area $\frac{3}{200}n^2$, and that $R_1$ and $R_1-(a,b)$ are contained in the big stripe. Therefore, one can find at least $\frac{\lambda(R_1)}{2}$ disjoint pairs of the form $\{(x,y), (x,y)-(a,b)\}$ inside the big stripe, so that $S$ can have at most one element from each pair.  As $A$ has at most $(2\gamma+\beta)n^2$ points outside the big stripe, the number of such $S\subseteq A$ is at most $2^{(0.6+2\gamma+\beta)n^2}\cdot(\frac{3}{4})^{\frac{\lambda(R_1)}{2}}$. Taking a union bound over all possible containers $A$, we get that the number of sum-free subsets of $[n]^2$ that have a point with coordinate sum less than $0.7 n$ is at most $2^{(0.6+2\gamma+\beta-\frac{3}{400}\cdot\log_2(\frac{4}{3})+o(1))n^2}=o(2^{0.6n^2})$.

    Suppose that a subset $S$ of $A$ contains a point $(a,b)$ satisfying $a+b\geq 1.7n$. Let $$R_2=\{(x,y)\in \mathbb{R}^2:x,y\leq 0.7n, 0.8n\leq x+y\leq0.85n\}.$$ Again, as above, since $R_2$ and $(a,b)-R_2$ are contained in the big stripe, we can find at least $\frac{\lambda(R_2)}{2}$ disjoint pairs of the form $\{(x,y), (a,b)-(x,y)\}$ inside the big stripe so that $S$ can have at most one element from each pair. So, similarly as above, the number of such $S\subseteq A$ is at most $2^{(0.6+2\gamma+\beta)n^2}\cdot(\frac{3}{4})^{\frac{\lambda(R_2)}{2}}$. Noting that $R_2$ has a larger area than $R_1$, we can take a union bound similarly as above to get that there are $o(2^{0.6n^2})$ sum-free subsets of $[n]^2$ that have a point with coordinate sum at least $1.7 n$.
\end{proof}

\noindent\textbf{Note.} If we replace the number $0.1$ in Proposition \ref{proximity} with any smaller $\alpha>0$, our proof can be adapted slightly so it still goes through. We fixed $\alpha=0.1$ as it suffices for our purposes to be concrete about the other constants in the proof. With this modification, one obtains the following typical structure result.

\begin{prop}
    The number of sum-free subsets of $[n]^2$ that are not $o(1)$-close to the big stripe is $o(2^{0.6n^2})$. That is, asymptotically almost surely, a uniformly chosen sum-free subset of $[n]^2$ is $o(1)$-close to the big stripe.
\end{prop}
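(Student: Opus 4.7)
The plan is to combine the note's adaptation of Proposition \ref{proximity} with a standard diagonal argument. The note states that for any fixed $\alpha \in (0, 0.1]$, the proof of Proposition \ref{proximity} adapts to show that the number of sum-free subsets of $[n]^2$ that are not $\alpha$-close to the big stripe is $o(2^{0.6n^2})$. The substantive check for this adaptation is that the auxiliary regions $R_1, R_2$ can be replaced by $\alpha$-dependent analogues — for instance $R_1(\alpha) = \{(x,y) : x, y \geq (0.8-\alpha)n,\; (1.6-\alpha)n \leq x+y \leq 1.6n\}$, which still lies in the big stripe along with its translate $R_1(\alpha) - (a,b)$ for every $(a,b)$ with $a+b \leq (0.8-\alpha)n$. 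Its area is of order $\alpha^2 n^2$, so the resulting pair-counting factor $(3/4)^{\lambda(R_1(\alpha))/2}$ contributes a gain of order $\alpha^2 n^2$ in the exponent. Taking the stability parameter $\gamma$ of order $\alpha^2$ (and $\beta < \delta(\gamma)$ correspondingly small) ensures that this gain dominates the $(2\gamma + \beta)n^2$ loss from points outside the big stripe, so the inequality chain in Section \ref{subsection proof of close} closes for every fixed $\alpha > 0$.

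Given the adapted statement, the proposition follows by diagonalization. For each positive integer $k$, applying the adapted form of Proposition \ref{proximity} with $\alpha = 1/k$ yields an integer $N_k$ such that for all $n \geq N_k$ the number of sum-free subsets of $[n]^2$ that are not $(1/k)$-close to the big stripe is at most $2^{0.6n^2}/k$. Arrange so that $N_1 < N_2 < \cdots$ and define $f : \mathbb{N} \to (0,\infty)$ by $f(n) = 1/k$ for $N_k \leq n < N_{k+1}$. Then $f(n) \to 0$, and the number of sum-free subsets of $[n]^2$ that are not $f(n)$-close to the big stripe is at most $f(n) \cdot 2^{0.6n^2} = o(2^{0.6n^2})$, which is exactly the proposition. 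The second sentence of the proposition then follows because the total number of sum-free subsets is already $\Omega(2^{0.6 n^2})$ by Observation \ref{obs1} and Theorem \ref{max2d}.

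The only real obstacle is bookkeeping: tracking how the constants $\gamma, \delta, \beta, \epsilon$ in the proof of Proposition \ref{proximity} must scale with $\alpha$ so that the argument continues to close as $\alpha$ shrinks. No new idea beyond what is already in the paper is needed — once the $\alpha$-dependent regions are written down and the parameter dependencies are recorded, the rest of the argument is formally identical, and the diagonalization finishes the job.
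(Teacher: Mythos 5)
Your proposal is correct and is essentially the argument the paper intends: rerun the proof of Proposition \ref{proximity} with $\alpha$-dependent auxiliary regions and parameters (taking $\gamma\asymp\alpha^2$ so that the pair-counting gain of order $\alpha^2n^2$ beats the $(2\gamma+\beta)n^2$ loss), then pass from fixed $\alpha$ to $o(1)$ by the standard diagonalisation over $\alpha=1/k$. One cosmetic remark: for the a.a.s.\ statement it is cleaner to lower-bound the total number of sum-free sets by $2^{\lambda(\text{big stripe})}\geq 2^{0.6n^2}$ directly (the stripe's lattice-point count has a nonnegative linear correction), since the $O(n)$ term in Theorem \ref{max2d} does not by itself specify a sign.
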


\section{Counting sum-free sets close to the big stripe}
\label{far}

Our goal in this section is to count the number of sum-free subsets of $[n]^2$ that are $0.1$-close to the big stripe. To start with, we set up some notation for the region of interest.
\begin{definition}
    For $n\in \mathbb{N}$, let 
    $R_0=\{(x,y)\in [0,n]^2:0.7n\leq x+y\leq 1.7n\}$.
\end{definition}

\noindent Additionally, recall the terminology on lattice points established at the beginning of Section \ref{subsection proof of close}. Abusing notation, for $R\subset\mathbb{R}^2$, we will write $SF(R)$ to mean $SF(\Lambda(R))$.

\begin{prop}
\label{dircount}
    $SF(R_0)\leq 2^{0.6n^2 +O(n)}$.
\end{prop}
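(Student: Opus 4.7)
The plan is to first factor out a large unconstrained block of $R_0$, reducing the problem to counting sum-free subsets of a simpler, \emph{bipartite} region. Define
\[
T_1 \defeq R_0 \cap \{x+y \leq n\}, \qquad T_2 \defeq R_0 \cap \{x+y > n\}.
\]
For any Schur triple $(a,b,c)$ in $R_0$ the coordinate sums satisfy $s_a + s_b = s_c$ with $s_a, s_b \geq 0.7n$ and $s_c \leq 1.7n$, forcing $s_a, s_b \leq n$ and $s_c > n$; hence $a,b \in T_1$ and $c \in T_2$, and in particular each of $T_1$ and $T_2$ is already internally sum-free. Moreover, for every $S_1 \subseteq T_1$ the sumset $S_1 + S_1$ lies in $\{x+y \geq 1.4n\}$, so only the sub-strip $V \defeq T_2 \cap \{x+y \geq 1.4n\}$ can ever receive a constraint, while the complementary slab $U \defeq T_2 \setminus V$ is wholly free. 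Conditioning on $S \cap T_1$ and $S \cap V$ then yields
\[
\sf(R_0) = 2^{|U|} \cdot \sf(T_1 \cup V),
\]
and Lemma~\ref{estarea} gives $|U| = \area(U) + O(n) = 0.32\, n^2 + O(n)$. Hence it suffices to prove $\sf(T_1 \cup V) \leq 2^{0.28\, n^2 + O(n)}$ on the simpler region $T_1 \cup V$.

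To attack $\sf(T_1 \cup V)$, I would slice by anti-diagonals $D_\lambda \defeq R_0 \cap \{x+y = \lambda\}$, which are nonempty precisely for $\lambda \in [0.7n, n] \cup [1.4n, 1.7n]$. Each $D_\lambda$ is the set of lattice points of a one-dimensional interval parametrised by the $x$-coordinate, and every Schur triple $a+b=c$ in $T_1 \cup V$ is captured by a one-dimensional Schur triple $(x_a, x_b, x_a + x_b)$ living inside a unique ordered triple of fibers $(D_{\lambda_1}, D_{\lambda_2}, D_{\lambda_3})$ with $\lambda_1, \lambda_2 \in [0.7n, n]$ and $\lambda_3 = \lambda_1 + \lambda_2 \in [1.4n, 1.7n]$. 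Writing $A_\lambda \defeq S \cap D_\lambda$ as a subset of $\mathbb{Z}$ via the $x$-coordinate, the sum-free condition on $S$ becomes exactly the collection of one-dimensional constraints $(A_{\lambda_1} + A_{\lambda_2}) \cap A_{\lambda_3} = \emptyset$ over all such ordered triples.

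The preliminary per-triple bound is then a standard 1-D estimate: for any fixed triple, the Cauchy--Davenport / Freiman inequality $|A+B| \geq |A|+|B|-1$ on integer intervals converts a choice of $A_{\lambda_1}$ and $A_{\lambda_2}$ into a corresponding loss of freedom on $A_{\lambda_3}$. Summing over the two source fibers and multiplying across a suitably chosen partition of the fibers into triples -- the natural candidate being the \enquote{doubling} family $(D_\lambda, D_\lambda, D_{2\lambda})$ for $\lambda \in [0.7n, 0.85n]$, supplemented by near-doubling triples $(D_\lambda, D_{\lambda+1}, D_{2\lambda+1})$ to pick up odd sum-fibers and secondary triples covering the $T_1$-fibers in $(0.85n, n]$ -- gives a preliminary product bound of the right order of magnitude. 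The main obstacle is the refinement needed to land the sharp exponent $0.28\, n^2$: each sum fiber $D_{\lambda_3}$ appears in many admissible triples, one for every decomposition $\lambda_3 = \lambda_1 + \lambda_2$ with $\lambda_1, \lambda_2 \in [0.7n, n]$, and the actual constraint on $A_{\lambda_3}$ is that it avoid the full union $\bigcup_{\lambda_1 + \lambda_2 = \lambda_3} (A_{\lambda_1} + A_{\lambda_2})$, which is generically far larger than any single summand. Turning this overlap into an additional exponential saving -- most naturally via a Shearer-type entropy inequality applied to shifts of the doubling partition, or a Plünnecke-style amplification forcing many source fibers to be simultaneously small whenever $|A_{\lambda_3}|$ is large -- is where the delicate work will lie. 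Once that sharpening is in place, substituting back into $\sf(R_0) = 2^{|U|} \cdot \sf(T_1 \cup V)$ delivers the target bound $\sf(R_0) \leq 2^{0.6\, n^2 + O(n)}$.
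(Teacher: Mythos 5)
Your opening reduction is correct, and in fact slightly cleaner than the paper's: every Schur triple in $R_0$ has its two summands on anti-diagonals $x+y\leq n$ and its sum on an anti-diagonal $x+y\geq 1.4n$, so $\sf(R_0)=2^{\lambda(U)}\sf(T_1\cup V)$ with $\lambda(U)=0.32n^2+O(n)$, and the target $\sf(T_1\cup V)\leq 2^{0.28n^2+O(n)}$ is the right one (the big stripe meets $T_1\cup V$ in $\approx 0.28n^2$ points, so it is also best possible). This plays the role of the paper's Lemma~\ref{proprobust}, which instead pays $2^{0.36n^2+O(n)}$ to pass to the smaller region $R\cup L$. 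Up to here there is no problem.

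The gap is everything after that: the central estimate on the bipartite region is asserted, not proved, and you say so yourself (``is where the delicate work will lie''). Two concrete points. First, even your ``preliminary bound of the right order'' is not established: knowing $|A+B|\geq|A|+|B|-1$ does not by itself convert into an exponential saving when you sum $2^{-(\text{loss})}$ over all choices of the source sets -- controlling where $A+B$ sits relative to the sum fiber and performing that weighted sum is exactly the content of the paper's function $g$ and of Lemmas~\ref{bounding s by g} and~\ref{bounding g} (which themselves need the non-trivial Lemmas~\ref{calc1}, \ref{calc2} and \ref{comput}). Second, and more seriously, your choice of slicing by $x+y$ makes the overlap problem you defer structurally harder than the one the paper solves. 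With the paper's slicing by $x-y$, there are $\approx 1.6n$ short source fibers and $\approx 0.8n$ short sum fibers, each of length $\approx 0.15n$, and the set $\mathcal{T}$ partitions essentially all fibers into disjoint Schur triples, each Schur-embeddable into the fixed pattern $\{1,3,4\}\times[0.15n]_0$ (Lemma~\ref{lemschurembedding}); the only residual interaction is between a triple and its $\pm1$ shifts, and that is what the height-profile/discrepancy machinery (Lemmas~\ref{disc1}--\ref{lemma number of height profile} and Corollary~\ref{corollary large disc}) is built to exploit. In your slicing there are only $\approx 0.3n$ source fibers and $\approx 0.3n$ sum fibers, each of length $\Theta(n)$, so no partition of the fibers into genuine Schur triples exists (you would need twice as many sources as sums); your doubling family reuses each source fiber in two different families, leaves the sources in $(0.85n,n]$ to unspecified ``secondary triples'' (no triple of the form $(\lambda_1,\lambda_2,\lambda_1+\lambda_2)$ with both $\lambda_i>0.85n$ stays inside the region), and each sum fiber near $x+y=1.7n$ is constrained by $\approx 0.15n$ distinct pairs of source fibers. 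Multiplying counts over overlapping families does not give a valid bound without an argument, and the Shearer/Pl\"unnecke sharpening you invoke to handle this dense web of constraints is precisely the missing proof. As it stands, the proposal contains a correct but easy reduction and a programme, not a proof, of the main inequality.
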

\noindent As we noted in the introduction, Theorem \ref{mainthm} immediately follows from Propositions \ref{dircount} and \ref{proximity}.

\subsection{A different region}

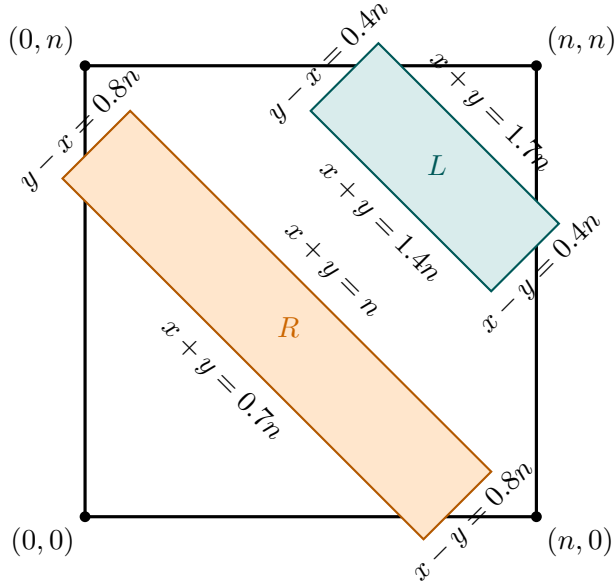
\begin{figure}[h]
    \centering
    \begin{tikzpicture}[scale=6]
  \def\n{1}

  % --- Square [0,n]^2 ---
  \draw[very thick] (0,0)--(\n,0)--(\n,\n)--(0,\n)--cycle;

  % --- Region R: 0.7n ≤ x+y ≤ n, |x−y| ≤ 0.8n ---
  \pgfmathsetmacro{\cA}{0.7*\n}
  \pgfmathsetmacro{\cB}{1.0*\n}
  \pgfmathsetmacro{\dPlus}{0.8*\n}
  \pgfmathsetmacro{\dMinus}{-0.8*\n}

  \coordinate (R1) at ({(\cA + \dMinus)/2},{(\cA - \dMinus)/2});
  \coordinate (R2) at ({(\cA + \dPlus)/2},{(\cA - \dPlus)/2});
  \coordinate (R3) at ({(\cB + \dPlus)/2},{(\cB - \dPlus)/2});
  \coordinate (R4) at ({(\cB + \dMinus)/2},{(\cB - \dMinus)/2});

  % --- Region R (orange) ---
  \fill[orange!20] (R1)--(R2)--(R3)--(R4)--cycle;
  \draw[thick,orange!70!black]
    (R1)--(R2)--(R3)--(R4)--cycle;

  % --- Region L: 1.4n ≤ x+y ≤ 1.7n, |x−y| ≤ 0.4n ---
  \pgfmathsetmacro{\cC}{1.4*\n}
  \pgfmathsetmacro{\cD}{1.7*\n}
  \pgfmathsetmacro{\dTwoPlus}{0.4*\n}
  \pgfmathsetmacro{\dTwoMinus}{-0.4*\n}

  \coordinate (L1) at ({(\cC + \dTwoMinus)/2},{(\cC - \dTwoMinus)/2});
  \coordinate (L2) at ({(\cC + \dTwoPlus)/2},{(\cC - \dTwoPlus)/2});
  \coordinate (L3) at ({(\cD + \dTwoPlus)/2},{(\cD - \dTwoPlus)/2});
  \coordinate (L4) at ({(\cD + \dTwoMinus)/2},{(\cD - \dTwoMinus)/2});

  % --- Region L (teal) ---
  \fill[teal!15] (L1)--(L2)--(L3)--(L4)--cycle;
  \draw[thick,teal!70!black]
    (L1)--(L2)--(L3)--(L4)--cycle;

  % --- Boundary line segments with centred labels ---
  % slope -1 labels outside but centred
  \draw[thin,orange!70!black]
    (R1)--(R2)
      node[midway,sloped,below,yshift=-3pt,text=black] {$x+y=0.7n$};
  \draw[thin,orange!70!black]
    (R3)--(R4)
      node[midway,sloped,above,yshift=3pt,text=black] {$x+y=n$};
  \draw[thin,orange!70!black]
    (R2)--(R3)
      node[midway,sloped,below,text=black] {$x-y=0.8n$};
  \draw[thin,orange!70!black]
    (R1)--(R4)
      node[midway,sloped,above,text=black] {$y-x=0.8n$};

  \draw[thin,teal!70!black]
    (L1)--(L2)
      node[midway,sloped,below,yshift=-3pt,text=black] {$x+y=1.4n$};
  \draw[thin,teal!70!black]
    (L3)--(L4)
      node[midway,sloped,above,yshift=3pt,text=black] {$x+y=1.7n$};
  \draw[thin,teal!70!black]
    (L2)--(L3)
      node[midway,sloped,below,text=black] {$x-y=0.4n$};
  \draw[thin,teal!70!black]
    (L1)--(L4)
      node[midway,sloped,above,text=black] {$y-x=0.4n$};

  % --- Region labels ---
  \node[text=orange!80!black,font=\bfseries] at (0.45*\n,0.42*\n) {$R$};
  \node[text=teal!70!black,font=\bfseries] at (0.78*\n,0.78*\n) {$L$};

  % --- Square corner dots and labels ---
  \foreach \p/\pos/\txt in {
    (0,0)/below left/{(0,0)},
    (\n,0)/below right/{(n,0)},
    (\n,\n)/above right/{(n,n)},
    (0,\n)/above left/{(0,n)}
  }{
    \fill \p circle (0.012);
    \node[\pos] at \p {\(\txt\)};
  }
\end{tikzpicture}
    \caption{The simpler region $R \cup L$}
    \label{fig:2}
\end{figure}

Our first lemma will reduce the problem of counting sum-free sets of $R_0$ to that of a simpler region (see Figure \ref{fig:2}). We start by defining these key regions.

\begin{definition}
    For $n\in \mathbb{N}$, define the regions $R$ and $L$ as follows:
    \begin{align*}
        &R=\{(x,y)\in \mathbb{R}^2:0.7n\leq x+y\leq n, |x-y|\leq 0.8n\}\\
        &L=\{(x,y)\in \mathbb{R}^2:2\lceil0.7n\rceil\leq x+y\leq 1.7n, |x-y|\leq 0.4n\}.
    \end{align*}
\end{definition}

\begin{lemma}
\label{proprobust}
    \[
    \sf(R_0)\leq 2^{0.36n^2+O(n)}\sf(R\cup L).
    \]
\end{lemma}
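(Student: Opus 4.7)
The plan is a projection argument. For any sum-free $S \subseteq \Lambda(R_0)$, consider its restriction $\phi(S) \defeq S \cap \Lambda(R \cup L)$. Because every subset of a sum-free set is sum-free, $\phi$ sends sum-free subsets of $R_0$ to sum-free subsets of $R \cup L$. Fixing any sum-free $T \subseteq \Lambda(R \cup L)$, the preimage $\phi^{-1}(T)$ is contained in $\{T \cup T' : T' \subseteq \Lambda(R_0) \setminus \Lambda(R \cup L)\}$, so $|\phi^{-1}(T)| \leq 2^{\lambda(R_0 \setminus (R \cup L))}$. Summing over $T$ then yields $\sf(R_0) \leq 2^{\lambda(R_0 \setminus (R \cup L))} \cdot \sf(R \cup L)$, and the lemma reduces to showing $\lambda(R_0 \setminus (R \cup L)) \leq 0.36 n^2 + O(n)$.

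For the latter I would invoke Lemma \ref{estarea}: since $R_0 \setminus (R \cup L)$ is a finite union of convex polygons with few sides, its lattice-point count is approximated by its area up to $O(n)$ error. The area computation itself is routine. One has $\text{Area}(R_0) = n^2 - \tfrac{1}{2}(0.7n)^2 - \tfrac{1}{2}(0.3n)^2 = 0.71 n^2$; the regions $R$ and $L$ are disjoint (since $R$ lies in $\{x+y \leq n\}$ while $L$ lies in $\{x+y \geq 2\lceil 0.7n\rceil > n\}$); and, using the change of variables $u = x+y, v = x-y$, one obtains $\text{Area}(R \cap R_0) = 0.235 n^2$ and $\text{Area}(L \cap R_0) = 0.115 n^2$, after subtracting the small triangular slivers of $R$ and $L$ that lie outside $[0,n]^2$. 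Subtracting gives $\text{Area}(R_0 \setminus (R \cup L)) = 0.36 n^2$, as required.

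I foresee no substantive obstacle here: the bound is really an information-theoretic decomposition that pays $2^{0.36n^2 + O(n)}$ bits to encode the contents of $R_0 \setminus (R \cup L)$ with no structure exploited on that piece. The only step requiring a little care is the area arithmetic, in particular handling the tiny portions where the parallelograms $R$ and $L$ slightly escape $[0,n]^2$ (each with a sliver of area $\approx 0.0025 n^2$ near a corner).
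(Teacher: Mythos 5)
Your proposal is correct and matches the paper's proof in essence: the paper likewise bounds $\sf(R_0)\leq \sf(R_0\setminus(R\cup L))\,\sf(R\cup L)\leq 2^{\lambda(R_0\setminus(R\cup L))}\sf(R\cup L)$ and then applies Lemma \ref{estarea} to replace the lattice count by the area $0.36n^2+O(n)$. Your area arithmetic (including the $\approx 0.0025n^2$ corner slivers of $R$ and $L$ outside $[0,n]^2$) checks out.
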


\noindent\textit{Proof.}
    Note that
    \begin{align*}
        \sf(R_0)&\leq \sf(R_0\setminus(R\cup L))\sf(R\cup L)\\
        &\leq 2^{\lambda(R_0\setminus(R\cup L))}\sf(R\cup L)\\
        &=2^{\area(R_0\setminus(R\cup L))+O(n)}\sf(R\cup L)\\
        &= 2^{0.36n^2+O(n)}\sf(R\cup L),
    \end{align*}
    where we used Lemma \ref{estarea} in the penultimate step.
\qed

\subsection{Decomposing into fibers}

In order to count the sum-free sets of the two-dimensional region $R\cup L$, we will divide the region into triplets of one-dimensional fibers and consider each triplet separately.

\begin{definition}
    Let $w\defeq\lfloor0.4n\rfloor$. Define the fibers $R_j$ for $|j|\leq 2w$ and $L_k$ for $|k|\leq w$ as follows:
    \begin{align*}
        R_j&\defeq\{(x,y)\in \Lambda(R):x-y=j\}\\
        L_k&\defeq\{(x,y)\in \Lambda(L):x-y=k\}.
    \end{align*}
\end{definition}

We label the points on a fiber by their height as follows.

\begin{definition}[height $h$]
    Define $h:R\cup L\to \mathbb{Z}_{\geq0}$ as $h((x,y))\defeq \lfloor\frac{x+y-\lceil0.7n\rceil}{2}\rfloor$ for $(x,y)\in R$ and $h((x,y))\defeq \lfloor\frac{x+y-2\lceil0.7 n\rceil}{2}\rfloor$ for $(x,y)\in L$.
\end{definition} 

Next, we define the set of Schur triples that help us partition the fibers.

\begin{definition}
    Recall $w=\lfloor0.4n\rfloor$. Define the set of triples $\mathcal{T}$ as \[
    \mathcal{T}\defeq\{(-t,2t,t):t\in [w]\}\cup \{(-w-t,2t-1,-w-1+t):t\in [w]\}.
    \]
\end{definition}
Note that $R_i, R_j, L_k$ for $(i,j,k)\in\mathcal{T}$ together partition all but $O(n)$ points of $R\cup L$. We have thus obtained a decomposition of $R\cup L$ into (Schur) triples of 1-dimensional fibres\footnote{Here's an \href{https://www.desmos.com/calculator/ezlzw2jqvb}{animation} which visually illustrates this decomposition.}.

Finally, we define the notion of a \emph{Schur embedding}, which allows us to reduce any triplet of fibres to $\{1,3,4\}\times [0.15n]_0$. Here, and in the rest of the paper, by $[n]_0$ we mean $[n]\cup \{0\}$.

\begin{definition}
    We say that $R_1$ Schur embeds into $R_2$ if there exists an injection $f:R_1\to R_2$ such that for all $a,b,c\in R_1$, $a+b=c$ if and only if $f(a)+f(b)=f(c)$.
\end{definition}

\begin{lemma}
\label{lemschurembedding}
    Suppose $i\neq j$ and $k=i+j$ satisfy $|i|,|j|\leq 2w$ and $|k|\leq w$. Then $R_i\cup R_j\cup L_k$ Schur embeds into $\{1,3,4\}\times [0.15n+O(1)]_0$.
\end{lemma}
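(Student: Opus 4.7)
The plan is to parameterize each of the three one-dimensional fibers by an integer ``height'' and send $R_i$, $R_j$, $L_k$ into $\{1\}$, $\{3\}$, $\{4\}$ times $[0.15n + O(1)]_0$ respectively. The reason for the choice $\{1,3,4\}$ is that its only sum decomposition within the set is $1+3=4$, so Schur triples in the target are forced to have exactly one element from each slice, matching the structure of Schur triples we will establish in the source.

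First I would classify all Schur triples $a+b=c$ contained in $R_i \cup R_j \cup L_k$. The sum coordinate $s = x+y$ lies in $[\lceil 0.7n \rceil, n]$ on $R_i \cup R_j$ and in $[2\lceil 0.7n \rceil, \lfloor 1.7n \rfloor]$ on $L_k$, so comparing ranges forces $a, b \in R_i \cup R_j$ and $c \in L_k$. The difference coordinate $x-y$ then gives $d_a + d_b = k$ with $d_a, d_b \in \{i, j\}$, producing the three options $2i = k$, $2j = k$, and $i + j = k$; the first two each force $i = j$ via $k = i+j$, contradicting the hypothesis. Hence any Schur triple has $a \in R_i$, $b \in R_j$, $c \in L_k$ (up to swapping $a$ and $b$).

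For the map itself, let $P = \lceil 0.7n \rceil$ and write each sum as $s_a = 2h(a) + P + \epsilon_i$ on $R_i$, $s_b = 2h(b) + P + \epsilon_j$ on $R_j$, and $s_c = 2h(c) + 2P + \epsilon_k$ on $L_k$, where $\epsilon_i, \epsilon_j, \epsilon_k \in \{0,1\}$ encode parities and $\epsilon_k \equiv \epsilon_i + \epsilon_j \pmod 2$. Then $s_a + s_b = s_c$ translates to $h(c) = h(a) + h(b) + \mu$ where $\mu = (\epsilon_i + \epsilon_j - \epsilon_k)/2 \in \{0,1\}$ depends only on $i, j, P$. I would define
\[ f(a) = (1, h(a)), \quad f(b) = (3, h(b) + \mu), \quad f(c) = (4, h(c)) \]
for $a \in R_i, b \in R_j, c \in L_k$. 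This map is injective, its image lies in $\{1,3,4\} \times [0.15n + O(1)]_0$, and the Schur-preservation condition holds in both directions: forward via the direct computation $f(a) + f(b) = (4, h(a) + h(b) + \mu) = (4, h(c)) = f(c)$, and backward by using $1 + 3 = 4$ to recover the fiber assignment and then inverting the height equation to reconstruct $s_a + s_b = s_c$, which together with $i + j = k$ yields $a + b = c$ in $\mathbb{Z}^2$. The one delicate point is the parity bookkeeping --- the height function loses a parity bit --- and the crucial observation is that the shift $\mu$ is a single constant independent of the particular triple, so one uniform shift on the $R_j$-image handles every triple at once.
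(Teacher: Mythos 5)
Your proof is correct and follows essentially the same route as the paper: map $R_i$, $R_j$, $L_k$ to the fibers $x=1,3,4$ via the height function, with a single parity-dependent shift to account for the lost parity bit. The only (immaterial) difference is that you apply the shift $\mu$ to the $R_j$ image, whereas the paper uses $h$ or $h-1$ on $L_k$; your write-up also spells out the classification of Schur triples and the constancy of the shift, which the paper leaves implicit.
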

\begin{proof}
    We construct the Schur embedding as follows. Map the fiber $R_i$, $R_j$, and $L_k$ into the fibers $x=1$, $x=3$, and $x=4$ respectively. The $y$-coordinates of the mapped points are given by the height function $h$ on $R_i$ and $R_j$, and by either $h$ or $h-1$ on $L_k$, depending on the parity of $i$ and $j$.
\end{proof}

We are now ready to provide an upper bound $\sf(R\cup L)$ in terms of the following simple function $s$.

\begin{definition}
    \[
    s(n)\defeq\sf(\{1,3,4\}\times [n])
    \]
\end{definition}

\begin{lemma}
\label{bound RUL by s}
    \[
    \sf(R\cup L)\leq 2^{O(n)}s(\lfloor0.15n\rfloor)^{0.8n}
    \]
\end{lemma}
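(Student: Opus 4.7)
The plan is to exploit the fiber partition of $\Lambda(R\cup L)$ induced by $\mathcal{T}$ together with the Schur embedding provided by Lemma \ref{lemschurembedding}. First I would verify that the $6w$ fibers appearing in $\mathcal{T}$ --- namely $R_i$ for $|i|\in\{1,\dots,2w\}$ and $L_k$ for $|k|\in\{1,\dots,w\}$ --- are pairwise disjoint and that each appears in exactly one triple of $\mathcal{T}$; this is a direct check from the two explicit families in the definition. Their union covers $\Lambda(R\cup L)$ outside the fibers $R_0$, $L_0$, and a bounded number of boundary fibers that may remain when $2w<\lfloor 0.8n\rfloor$. Each such fiber has $O(n)$ lattice points, so the exceptional set
\[
T\defeq\Lambda(R\cup L)\setminus\bigcup_{(i,j,k)\in\mathcal{T}}\Lambda(R_i\cup R_j\cup L_k)
\]
satisfies $|T|=O(n)$.

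Next, since $T$ together with the $|\mathcal{T}|=2w$ fiber triples partitions $\Lambda(R\cup L)$, any sum-free $S\subseteq\Lambda(R\cup L)$ is uniquely determined by $S\cap T$ together with the family $\{S\cap\Lambda(R_i\cup R_j\cup L_k)\}_{(i,j,k)\in\mathcal{T}}$. Allowing $S\cap T$ to be any subset of $T$ and each restriction to be any sum-free subset of its triple gives the crude product bound
\[
\sf(R\cup L)\leq 2^{|T|}\prod_{(i,j,k)\in\mathcal{T}}\sf(R_i\cup R_j\cup L_k).
\]

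Each factor is then estimated via Lemma \ref{lemschurembedding}: every triple in $\mathcal{T}$ satisfies the hypotheses $i\neq j$, $k=i+j$, $|i|,|j|\leq 2w$, $|k|\leq w$, so the lemma supplies a Schur embedding $R_i\cup R_j\cup L_k\hookrightarrow\{1,3,4\}\times[\lfloor 0.15n\rfloor+c]_0$ for some absolute constant $c$. Since a Schur embedding preserves the Schur relation in both directions, sum-free subsets of the source biject with sum-free subsets of its image, so by monotonicity of $\sf$,
\[
\sf(R_i\cup R_j\cup L_k)\leq s(\lfloor 0.15n\rfloor+c)\leq 2^{O(1)}s(\lfloor 0.15n\rfloor),
\]
where the final step absorbs the $O(1)$ extra heights and the difference between $[\,\cdot\,]_0$ and $[\,\cdot\,]$ in the definition of $s$. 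Combining this with $|T|=O(n)$ and $|\mathcal{T}|\leq 0.8n$ yields the target bound.

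The only real obstacle here is the bookkeeping behind the partition claim: one has to check carefully that the two explicit families defining $\mathcal{T}$ cover each non-zero $R$-fiber and $L$-fiber exactly once, and that the remaining boundary and parity slack contribute only $O(n)$ lattice points. Once that is in place, the rest is a clean product bound, and all the $O(1)$ discrepancies (boundary heights, $[\,\cdot\,]$ versus $[\,\cdot\,]_0$, the additive $c$ in the codomain of the embedding, etc.) are absorbed into the $2^{O(n)}$ prefactor without further subtlety.
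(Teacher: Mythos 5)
Your argument is correct and follows essentially the same route as the paper: partition $\Lambda(R\cup L)$ into the fiber triples indexed by $\mathcal{T}$ plus an $O(n)$-point remainder, apply the crude product bound, and use Lemma \ref{lemschurembedding} to bound each factor by $2^{O(1)}s(\lfloor 0.15n\rfloor)$. The extra bookkeeping you flag (exact coverage by $\mathcal{T}$, the $[\,\cdot\,]_0$ versus $[\,\cdot\,]$ and $+O(1)$ slack) is exactly what the paper absorbs implicitly into its $2^{O(n)}$ and $2^{O(1)}$ factors.
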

\begin{proof}
    As $R_i, R_j, L_k$ for $(i,j,k)\in\mathcal{T}$ together partition all but $O(n)$ points of $R\cup L$, we have that
    \[
    \sf(R\cup L)\leq 2^{O(n)}\prod_{(i,j,k)\in \mathcal{T}}\sf(R_i\cup R_j \cup L_k).
    \]
    By Lemma \ref{lemschurembedding}, $R_i\cup R_j \cup L_k$ Schur embeds into $\{1,3,4\}\times [0.15n+O(1)]$ and so
    \[
    \sf(R_i\cup R_j \cup L_k)\leq \sf (\{1,3,4\}\times [0.15n+O(1)]) \leq 2^{O(1)} s(\lfloor0.15n\rfloor).
    \]
    
\end{proof}

\subsection{Bounding $\sf(\{1,3,4\}\times [n])$}
\label{sec bounding s(n)}

By virtue of Lemmas \ref{proprobust} and \ref{bound RUL by s}, any bound on $s(n)$ will translate to a bound on $\sf(R_0)$. So, in this subsection, we attempt to bound $s$. We do this by bounding the following important function.

\begin{definition}
    Let $g:\mathbb{Z}\to \mathbb{R}$ be the function satisfying
    $$g(n)\defeq \sum_{\substack{0\in S_1\subseteq [n]_0\\
                    0\in S_2\subseteq [n]_0\\ n+1\notin S_1+S_2}} 2^{-|(S_1+S_2)\cap[n]_0|}$$
    for all $n\in\mathbb{N}$ and define $g(n)\defeq 1$ for $n\in \mathbb{Z}\setminus\mathbb{N}$.
\end{definition}

\noindent $g(n)$ arises naturally because of the following lemma.

\begin{lemma}
\label{bounding s by g}
$\displaystyle s(n)\leq2^{2n+O(1)}\sum_{a,b,c\in [n]}2^{-|c-(a+b)|}g(c-(a+b)-1)$.
\end{lemma}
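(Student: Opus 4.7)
Since $1+3=4$ is the only Schur relation within $\{1,3,4\}$, a subset $S\subseteq\{1,3,4\}\times[n]$ is sum-free if and only if, setting $S_j:=\{y : (j,y)\in S\}$ for $j\in\{1,3,4\}$, we have $(S_1+S_3)\cap S_4=\emptyset$. Summing over the valid $S_4$ for each fixed $(S_1,S_3)$ gives the identity
\[
s(n)=\sum_{S_1,S_3\subseteq[n]}2^{n-|(S_1+S_3)\cap[n]|}.
\]
The three parameters $a,b,c$ are introduced by parameterizing by minima. Triples where some $S_j$ is empty contribute only $O(2^{2n})$, which is absorbed into the $2^{O(1)}$ slack. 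For the rest, I set $a=\min S_1$, $b=\min S_3$, $c=\min S_4$ and shift $T_j:=S_j-\min S_j$, so that $0\in T_j$ and $T_j\subseteq[n-\min S_j]_0$. The sum-free condition becomes $(T_4+d)\cap(T_1+T_3)=\emptyset$ with $d:=c-(a+b)$, which (evaluating at $0\in T_4$ and $0\in T_1+T_3$) forces $d\notin T_1+T_3$.

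The function $g(d-1)$ emerges from a local analysis. For $d\geq 1$, decompose each $T_j$ ($j\in\{1,3\}$) as $U_j\sqcup V_j$ with $U_j=T_j\cap[0,d-1]$ and $V_j=T_j\cap[d+1,n-\min S_j]$; the element $d$ itself cannot lie in $T_j$, since $d\notin T_1+T_3$ combined with $0\in T_{4-j}$ would otherwise yield a contradiction. The Minkowski sum decomposes as
\[
T_1+T_3=(U_1+U_3)\cup(U_1+V_3)\cup(V_1+U_3)\cup(V_1+V_3),
\]
and since the last three pieces lie entirely in $[d+1,\infty)$, the constraint $d\notin T_1+T_3$ reduces cleanly to $d\notin U_1+U_3$. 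The weight $2^{-|(U_1+U_3)\cap[d-1]_0|}$ appearing in the definition of $g(d-1)$ emerges naturally from the $T_4$-count bookkeeping (after summing over $T_4$ one obtains a factor $2^{n-c-|F|}$ with $F=((T_1+T_3)-d)\cap[1,n-c]$, whose low-low contribution is exactly what defines $g$). The residual $2^{-|d|}$ factor accounts for the "gap" at position $d$, and the remaining sum over the high parts $V_1,V_3$ together with any residual $T_4$ choices is bounded by $2^{2n+O(1)}$ by routine counting.

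For $d\leq 0$ one has $g(d-1)=1$ by convention; here the condition $d\notin T_1+T_3$ is automatic since $T_1+T_3\subseteq\mathbb{Z}_{\geq 0}$, and the desired factor $2^{-|d|}$ follows directly from the $T_4$-count. Summing the resulting pointwise (or aggregate) bounds over all $(a,b,c)\in[n]^3$ yields the claim.

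The principal obstacle is the careful bookkeeping of the Minkowski cross-terms $U_i+V_j$: they contribute extra elements to $(T_1+T_3)\cap[d+1,n-a-b]$ that do not factor neatly into the $g(d-1)$-weight, and must be absorbed either by summing over the high parts or by invoking a lossless lower bound on the sumset size that still recovers the $g$-weight on the low-low piece.
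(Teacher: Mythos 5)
Your opening reduction is sound and matches the paper's starting point: the only Schur relation among the first coordinates is $1+3=4$, so writing $S_j=\{y:(j,y)\in S\}$, sum-freeness is exactly $(S_1+S_3)\cap S_4=\emptyset$, and parameterizing by three extremal elements is the right idea. The gap is in which extrema you choose: you anchor at $c=\min S_4$, whereas the argument needs $c=\max S_4$ (with $a,b$ the minima of $S_1,S_3$, as in the paper). This breaks both halves of your bound. For $d=c-(a+b)\le 0$ you assert that the factor $2^{-|d|}$ ``follows directly from the $T_4$-count''; it does not. With $c=\max S_4$ the trivial count is $2^{(n-a)+(n-b)+c}=2^{2n-|d|}$, but with $c=\min S_4$ it is $2^{(n-a)+(n-b)+(n-c)}$, which can exceed your claimed per-triple bound by an exponential factor: take $S_1=S_3=\{n\}$ and $\min S_4=1$; there are $2^{n-1}$ such sum-free sets, while $2^{2n-|1-2n|+O(1)}g(\cdot)=2^{O(1)}$.

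For $d\ge 1$ the identification of the $g$-weight also fails. Since $S_4\subseteq[c,n]$ in your parameterization, only sumset elements that are at least $c$ can exclude choices of $T_4$; the low-low sums $U_1+U_3$ restricted to $[0,d-1]$ correspond to sumset elements in $[a+b,c-1]$, which lie strictly below $\min S_4$ and rule out nothing. So the quantity $2^{-|(U_1+U_3)\cap[d-1]_0|}$, which is precisely what defines $g(d-1)$, never emerges from your $T_4$-count. (The obstacle you flag, the cross terms $U_i+V_j$, is actually harmless for an upper bound: extra exclusions only help and can be discarded.) The paper avoids all of this by taking $c$ to be the \emph{maximum} on the fiber $x=4$: when $c>a+b$, the windows $S_1,S_2\subseteq[\lambda]_0$ just above the minima $a,b$ (with $\lambda=c-(a+b)-1$) knock out $|(S_1+S_2)\cap[\lambda]_0|$ points of the fiber $x=4$ below $c$, all of which are otherwise available since $c$ is the maximum, and sum-freeness together with $(4,c)\in S$ forces $\lambda+1\notin S_1+S_2$, exactly reproducing the definition of $g(\lambda)$; the case $c\le a+b$ is then the trivial count $2^{(n-a)+(n-b)+c}$. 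Your argument could likely be repaired by switching the anchor on the fourth fiber (or, symmetrically, taking maxima on fibers $1,3$ and the minimum on fiber $4$), but as written it does not prove the lemma.
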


\begin{proof}
    For a sum-free subset $S$ of $\{1,3,4\}\times [n]_0$, let $a$ and $b$ denote the minimum elements such that $(1,a)\in S$ and $(3,b)\in S$, and let $c$  denote the maximum element such that $(4,c)\in S$. It suffices to show that the number of $S$ with given $a$, $b$ and $c$ is at most $2^{2n-|c-(a+b)|+O(1)}g(c-(a+b)-1)$.

    There are at most $2^{n-a}\cdot2^{n-b}\cdot2^c\cdot2^{O(1)}$ choices for $S$, so in the case that $c\leq (a+b)$, we are done. We can, therefore, assume that $c>a+b$. Let $\lambda=c-(a+b)-1$.
    
    Fix the subsets $S_1$ and $S_2$ of $[\lambda]_0$ such that $1\times (S_1+a)=S\cap (1\times [a,a+\lambda])$ and $3\times (S_2+b)=S\cap (3\times [b,b+\lambda])$. Note that $0\in S_1\cap S_2$ as $(1,a),(3,b)\in S$ and that $\lambda+1\notin S_1+S_2$ as $S$ is sum-free and $(4,c)\in S$.

    Having fixed $S_1$ and $S_2$, there are $2^{2n-(a+b)-2\lambda+O(1)}$ choices for $S\cap (\{1,3\}\times [n])$. As $|S_1+S_2\cap [\lambda]_0|$ points are ruled out in $S\cap (4\times [c-\lambda, c])$, there are $2^{c-|(S_1+S_2)\cap[\lambda]_0|+O(1)}$ choices for $S\cap (4\times [n])$. Summing over possible $(S_1, S_2)$, we get that there are at most $2^{2n-\lambda+O(1)}g(\lambda)$ choices for $S$, as desired.
\end{proof}

We prove the following bound on $g(n)$ in the next subsection.

\begin{lemma}
\label{bounding g}
    There exists $\eps>0$ so that $g(n)=O((2-\eps)^n)$.
\end{lemma}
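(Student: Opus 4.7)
The plan is to reinterpret $g(n)$ probabilistically and exploit the product structure of the constraint. Let $\mathcal{A}$ be the set of pairs $(S_1, S_2)$ satisfying the conditions in the definition of $g$. The key observation is that the constraint $n+1\notin S_1+S_2$ decomposes into $n$ conditions $C_a:=\lnot(a\in S_1\wedge n+1-a\in S_2)$ for $a\in[n]$, and each $C_a$ involves the distinct pair of indicator variables $(\mathbb{1}_{a\in S_1},\mathbb{1}_{n+1-a\in S_2})$. Hence the $C_a$'s are mutually independent, each eliminating one quarter of the configurations on its pair of variables, so $|\mathcal{A}|=3^n$. Equivalently, grouping by the unordered pair $\{a,n+1-a\}$, each such pair has $9$ admissible joint configurations out of $16$. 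Under the uniform measure $\mathbb{P}$ on $\mathcal{A}$, I write $g(n)=3^n\,\mathbb{E}[2^{-|T|}]$.

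Next, introduce $M:=(n+1)-|T|=|[n]_0\setminus(S_1+S_2)|$, so that the goal becomes bounding $\mathbb{E}[2^M]$. The guiding heuristic is that $S_1+S_2$ typically covers almost all of $[n]_0$, so $|T|\approx n+1$. Concretely, for each $t\in[n]$, the pair-independence above gives $\mathbb{P}(u_a=1)=\mathbb{P}(v_b=1)=1/3$ and, provided $t\neq n+1$, the variables $u_a$ and $v_{t-a}$ lie in disjoint pair-constraints, so they are independent and $\mathbb{P}(u_a=1\wedge v_{t-a}=1)=1/9$. A short calculation then yields
\[
\mathbb{P}(t\notin S_1+S_2)\leq C\cdot(8/9)^t,
\]
so that $\mathbb{E}[M]=O(1)$. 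Expanding $2^M=|\{A\subseteq[n]_0\setminus(S_1+S_2)\}|$ leads to
\[
\mathbb{E}[2^M]=3^{-n}\sum_{A\subseteq[n]}\#\{(S_1,S_2)\in\mathcal{A}:A\cap(S_1+S_2)=\emptyset\},
\]
and the hope is to bound this by a constant via a product-type inequality of the form $\sum_A\mathbb{P}(A\cap(S_1+S_2)=\emptyset)\leq\prod_{t\in[n]}(1+\mathbb{P}(t\notin S_1+S_2))=\prod_t(1+O((8/9)^t))=O(1)$. Given $\mathbb{E}[2^M]=O(1)$, we obtain $\mathbb{E}[2^{-|T|}]=O(2^{-n})$, hence $g(n)=O((3/2)^n)=O((2-\epsilon)^n)$ for any $\epsilon<1/2$.

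The main obstacle is that the events $\{t\notin S_1+S_2\}$ are in general positively correlated (knowing $t$ is missed makes it more likely that $S_1,S_2$ are small and hence that $t'$ is also missed), so naive FKG-style bounds go the wrong way. I expect to handle this by analyzing the pair-overlap structure of the combined avoidance condition for $A\cup\{n+1\}$: for each $a'\in A\cup\{n+1\}$, the corresponding $n$ pair-constraints share variables with those for other $a''\in A\cup\{n+1\}$ only in structured ways that can be enumerated. The large margin between the conjectured growth rate $(3/2)^n$ and the target $(2-\epsilon)^n$ means one can tolerate substantial loss in such estimates; in particular, it suffices to establish $\mathbb{E}[2^M]=O(c^n)$ for any $c<4/3$, which should be provable even with a cruder correlation analysis.
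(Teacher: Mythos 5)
Your probabilistic reformulation is correct as far as it goes: the $2n$ indicators do split into $n$ disjoint constrained pairs, so $|\mathcal{A}|=3^n$ and $g(n)=3^n\,\mathbb{E}\bigl[2^{-|T|}\bigr]=\tfrac12(3/2)^n\,\mathbb{E}\bigl[2^M\bigr]$. The fatal problem is that the intermediate goal of your main route, $\mathbb{E}[2^M]=O(1)$ (equivalently $g(n)=O((3/2)^n)$), is false. Take $S_1=\{0\}\cup A_1$ and $S_2=\{0\}\cup A_2$ with $A_1,A_2$ arbitrary subsets of $[\lceil n/2\rceil+1,\,n]$. Then $S_1+S_2=\{0\}\cup A_1\cup A_2\cup(A_1+A_2)$, every element of $A_1+A_2$ exceeds $n+1$, so all constraints hold and $|(S_1+S_2)\cap[n]_0|=1+|A_1\cup A_2|$. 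Summing $2^{-1-|A_1\cup A_2|}$ over these pairs factorises, with a factor $1+\tfrac12+\tfrac12+\tfrac12=\tfrac52$ per element, giving $g(n)\geq\tfrac12(5/2)^{\lfloor n/2\rfloor}\geq c\,(1.58)^n>(3/2)^n$, i.e.\ $\mathbb{E}[2^M]$ is exponentially large in $n$. Consequently the product-type inequality $\sum_{A}\mathbb{P}(A\cap(S_1+S_2)=\emptyset)\leq\prod_{t}(1+\mathbb{P}(t\notin S_1+S_2))$ cannot be rescued by any correlation analysis: even granting your decay estimate, its right-hand side is $O(1)$ while its left-hand side equals $\mathbb{E}[2^M]$. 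The observation $\mathbb{E}[M]=O(1)$ is true but irrelevant, since $2^M$ is dominated by exactly these atypical pairs concentrated in the top half of $[n]_0$. (A smaller issue: for $t>(n+1)/2$ the events indexed by the representations $t=a+(t-a)$ are no longer supported on disjoint constraint pairs, so even the claimed $(8/9)^t$ bound needs a chaining argument; that part is fixable.)

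Your fallback, that it suffices to show $\mathbb{E}[2^M]=O(c^n)$ for some $c<4/3$, is accurate but has no content: via $g(n)=\tfrac12(3/2)^n\mathbb{E}[2^M]$ it is precisely a restatement of the lemma, and you offer no argument for it beyond the hope that a ``cruder correlation analysis'' suffices. Note that the obvious crude estimates do not suffice: bounding $2^{-|T|}\leq 2^{-|S_1\cup S_2|}$ (valid since $0\in S_1\cap S_2$) and factorising over the pairs $\{t,n+1-t\}$ — essentially the same independence structure you identified — yields a per-pair factor $2\cdot2^{-2}+6\cdot2^{-1}+1=\tfrac92$, hence only $g(n)=O\bigl((9/2)^{n/2}\bigr)\approx(2.12)^n$, which is worse than $2^n$. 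This is exactly why the paper's proof introduces a case split: either $S_1+S_2$ contains every integer in $[\tfrac{3n}4+1,\tfrac{15n}{16}]$, which injects many extra factors of $\tfrac12$ into the factorised product, or $S_1+S_2$ misses some $n+1-k$ with $k\in[\tfrac n{16},\tfrac n4]$, in which case the two reflection constraints are handled by a weighted independent-set computation on unions of paths (Lemma \ref{comput}). Some device of this kind, exploiting either many forced sums or a second forbidden level, is the real content of the lemma and is absent from your proposal.
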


\noindent As corollaries, we can obtain the following bounds on $s$ and $\sf([n]^2)$.
\begin{corollary}
\label{bound on s(n)}
    $s(n)=O(n^22^{2n})$.
\end{corollary}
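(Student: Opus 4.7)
The plan is to combine Lemma \ref{bounding s by g} with Lemma \ref{bounding g} and show that the triple sum collapses to $O(n^2)$. Starting from the bound
\[
s(n)\leq 2^{2n+O(1)}\sum_{a,b,c\in[n]}2^{-|c-(a+b)|}\,g(c-(a+b)-1),
\]
I would fix $a,b$ and sum over $c$ first. Setting $d\defeq c-(a+b)$, the inner summand becomes $2^{-|d|}g(d-1)$, where we recall that $g(d-1)=1$ whenever $d-1\notin\mathbb{N}$.

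For $d\leq 0$ the summand is just $2^{-|d|}$, and so these terms contribute at most $\sum_{d\leq 0}2^{d}=O(1)$. For $d\geq 1$, Lemma \ref{bounding g} gives $g(d-1)=O((2-\eps)^{d-1})$ for some fixed $\eps>0$, and hence
\[
2^{-d}g(d-1)=O\!\left(\frac{1}{2-\eps}\left(1-\tfrac{\eps}{2}\right)^{d}\right),
\]
which is geometrically decaying in $d$. Summing over $d\geq 1$ also yields $O(1)$. Thus the inner sum over $c$ is $O(1)$, uniformly in $a,b$.

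It then remains to sum over $a,b\in[n]$, which contributes a factor of $n^2$. Altogether this gives $s(n)\leq 2^{2n+O(1)}\cdot O(n^2)=O(n^2 2^{2n})$, as required. There is no real obstacle here — the argument is a routine interchange-of-summation calculation, with the only subtlety being the matching of the exponential growth of $g$ against the exponential decay of $2^{-|c-(a+b)|}$, both of which are aligned in the right direction thanks to Lemma \ref{bounding g}.
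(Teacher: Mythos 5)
Your proof is correct and is essentially the paper's argument: both bound the inner sum over $c$ by an absolute constant via Lemma \ref{bounding g} (the geometric decay of $2^{-|c-(a+b)|}$ beating the $(2-\eps)^{c-(a+b)-1}$ growth of $g$), and then pick up the factor $n^2$ from summing over $a,b\in[n]$. You merely spell out the convergence computation that the paper leaves implicit.
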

\begin{proof}
    For fixed integers $a$ and $b$, $\displaystyle\sum_{c\in \mathbb{Z}}2^{-|c-(a+b)|}g(c-(a+b)-1)$ is an absolute constant by Lemma \ref{bounding g} and so, when this is summed over $a,b\in[n]$, the total is $O(n^2)$. The result now follows from Lemma \ref{bounding s by g}.
\end{proof}
\begin{corollary}
    $\sf([n]^2)=2^{0.6n^2+O(n\log n)}$.
\end{corollary}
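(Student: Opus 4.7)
The plan is to chain together the bounds established so far in this subsection with the reduction of Lemma \ref{proprobust} and the approximate-structure result of Proposition \ref{proximity}, picking up a $\log n$ factor from the polynomial in front of $2^{2n}$ in Corollary \ref{bound on s(n)}.

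First I would split $\sf([n]^2)$ according to whether the sum-free set is $0.1$-close to the big stripe. By Proposition \ref{proximity}, the contribution from sets that are not $0.1$-close is $o(2^{0.6n^2})$, which is negligible compared to the target bound. Every sum-free subset that is $0.1$-close to the big stripe is contained in $\Lambda(R_0)$, so the remaining sets are counted by $\sf(R_0)$.

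Next I would estimate $\sf(R_0)$ by combining Lemmas \ref{proprobust} and \ref{bound RUL by s}:
\[
\sf(R_0)\leq 2^{0.36n^2+O(n)}\sf(R\cup L)\leq 2^{0.36n^2+O(n)}\cdot s(\lfloor 0.15n\rfloor)^{0.8n}\cdot 2^{O(n)}.
\]
Plugging in Corollary \ref{bound on s(n)}, which gives $s(\lfloor 0.15n\rfloor)=O(n^2\cdot 2^{0.3n})$, the factor $s(\lfloor 0.15n\rfloor)^{0.8n}$ equals $2^{0.24n^2+O(n\log n)}$, since $(n^2)^{0.8n}=2^{1.6n\log_2 n}$. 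Multiplying by $2^{0.36n^2+O(n)}$ yields $\sf(R_0)\leq 2^{0.6n^2+O(n\log n)}$, and adding the $o(2^{0.6n^2})$ term from Proposition \ref{proximity} preserves this bound, giving the upper half of the corollary.

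For the matching lower bound I would simply invoke Observation \ref{obs1} together with Theorem \ref{max2d}: $\sf([n]^2)\geq 2^{M([n]^2)}=2^{0.6n^2+O(n)}$. Combining this with the upper bound above completes the proof. There is no real obstacle here; the only place the error term inflates from the paper's promised $O(n)$ (in Theorem \ref{mainthm}) to $O(n\log n)$ is precisely the polynomial prefactor $n^2$ in Corollary \ref{bound on s(n)}, which is why this corollary is weaker than the main theorem and is presented as a stepping stone.
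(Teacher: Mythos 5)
Your argument is correct and follows essentially the same route as the paper: chain Lemma \ref{proprobust}, Lemma \ref{bound RUL by s} and Corollary \ref{bound on s(n)} to get $\sf(R_0)\leq 2^{0.6n^2+O(n\log n)}$ (the $n\log n$ coming exactly from the polynomial prefactor, as you note), then add the $o(2^{0.6n^2})$ contribution from Proposition \ref{proximity}. Your explicit mention of the lower bound via Observation \ref{obs1} and Theorem \ref{max2d} is a detail the paper leaves implicit from the introduction, but it is the same proof.
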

\begin{proof}
    By Lemmas \ref{proprobust}, \ref{bound RUL by s} and Corollary \ref{bound on s(n)}, it follows that $\sf(R_0)\leq 2^{0.6n^2+O(n\log n)}$. The result now follows from Proposition \ref{proximity}.
\end{proof}

We wish to improve the error in the exponent from $O(n\log n)$ to $O(n)$. In order to do so using the strategy above, we would need $s(n)=2^{2n+O(1)}$. Unfortunately, this does not hold and in fact, $s(n)=\Theta(n^22^{2n})$ as can be seen by counting subsets of $1\times [a,n]\cup 3 \times [b,n]\cup 4\times [1,a+b)$ for the $\Theta(n^2)$ choices for $a,b \in [\frac{n}{2}]$. This means that we cannot ignore the interaction between distinct triplets of fibers. We will return to this point in Section \ref{heightprofile} but first we prove the claimed bound on $g(n)$.

\subsection{Proof of Lemma \ref{bounding g}: Bounding $g(n)$. }

The following quick observation is crucial to the proof.

 \begin{observation}
 \label{unioninsum}
     If $0\in S_1\cap S_2$, then $S_1\cup S_2 \subseteq S_1+S_2$.
 \end{observation}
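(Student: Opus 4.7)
The statement is an immediate set-theoretic observation, so the plan is simply to unfold the definition of $S_1+S_2$ and verify the two containments $S_1 \subseteq S_1+S_2$ and $S_2 \subseteq S_1+S_2$ separately. Recall that $S_1+S_2 \defeq \{a+b : a\in S_1, b\in S_2\}$.

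First, I would take an arbitrary element $s \in S_1$. Using the hypothesis $0 \in S_2$, I can write $s = s + 0$, exhibiting $s$ as an element of $S_1 + S_2$. This gives $S_1 \subseteq S_1+S_2$. Then, by the symmetric argument, taking $s \in S_2$ and writing $s = 0 + s$ with $0 \in S_1$ shows that $S_2 \subseteq S_1+S_2$. Combining the two containments yields $S_1 \cup S_2 \subseteq S_1+S_2$, which is exactly the claim.

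There is no real obstacle here: the observation is a direct two-line verification that crucially exploits $0$ acting as an additive identity. The content is simply that, in any abelian group (or even just a commutative monoid containing $0$), possessing $0$ as a common element forces each set to sit inside the sumset. The reason it is worth recording as a separate observation is presumably its repeated use in the forthcoming analysis of $g(n)$, where controlling $|(S_1+S_2)\cap[n]_0|$ from below via $|S_1 \cup S_2|$ will be the key leverage.
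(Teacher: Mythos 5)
Your proof is correct and is precisely the argument the paper implicitly relies on (the observation is stated without proof, being immediate): write $s=s+0$ or $s=0+s$ using the hypothesis $0\in S_1\cap S_2$. Nothing further is needed.
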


We will divide the sum $g(n)$ into the sums $\Tilde{g}(n)$ and $g_k(n)$ defined below.

 \begin{definition}[$\Tilde{g}(n)$]
     Let $\Tilde{g}(n)$ denote the sum of $2^{-|(S_1+S_2)\cap[n]_0|}$ over all subsets $S_1$ and $S_2$ of $[n]_0$ so that $0\in S_1 \cap S_2$, $n+1\notin S_1+S_2$ and $k\in S_1+S_2$ for all integers $k$ satisfying $\frac{3n}{4}+1\leq k\leq \frac{15n}{16}$.
 \end{definition}

  \begin{definition}[$g_k(n)$]
     For a positive integer $k$ satisfying $\frac{n}{16}\leq k\leq \frac{n}{4}$, let $g_k(n)$ denote the sum of $2^{-|(S_1+S_2)\cap[n]_0|}$ over all subsets $S_1$ and $S_2$ of $[n]_0$ so that $0\in S_1 \cap S_2$, $n+1\notin S_1+S_2$ and $n+1-k\notin S_1+S_2$.
 \end{definition}

 \begin{lemma}
 \label{calc1}
    There exists $\eps>0$ so that $\Tilde{g}(n)=O((2-\eps)^n)$.
 \end{lemma}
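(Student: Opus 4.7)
My plan is to reduce the sum defining $\tilde g(n)$ to a product over conjugate pairs $\{a, n+1-a\}$ and then bound each per-pair factor.

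First, since $0 \in S_1 \cap S_2$, Observation \ref{unioninsum} gives $S_1 \cup S_2 \subseteq S_1+S_2$; combined with the hypothesis $I := \{k \in \mathbb{Z} : \tfrac{3n}{4}+1 \leq k \leq \tfrac{15n}{16}\} \subseteq S_1+S_2$, this yields
\[
|(S_1+S_2) \cap [n]_0| \;\geq\; |(S_1 \cup S_2) \cup I| \;=\; |I| + |(S_1 \cup S_2) \setminus I|.
\]
Substituting into the weight and dropping the (now absorbed) covering condition from the indexing set,
\[
\tilde g(n) \;\leq\; 2^{-|I|} \sum_{\substack{0 \in S_1 \cap S_2 \subseteq [n]_0 \\ n+1 \notin S_1 + S_2}} 2^{-|(S_1 \cup S_2) \setminus I|}.
\]

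Second, I would partition $[1,n]$ into conjugate pairs $\{a, n+1-a\}$ (plus one self-paired element if $n$ is odd). The condition $n+1 \notin S_1+S_2$ decomposes pairwise: for each pair, it rules out the two configurations $a \in S_1, n+1-a \in S_2$ and $n+1-a \in S_1, a \in S_2$, leaving $9$ of the $16$ joint $(S_1,S_2)$-states per pair. Since the weight factor is itself a product over individual elements, the inner sum factorizes as a product over pairs, multiplied by a factor $1/2$ from the fixed element $0 \in S_1 \cap S_2$.

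With $L := [1, \tfrac{3n}{4}]$ and $R := [\tfrac{15n}{16}+1, n]$, the pairs split into $n/4$ of type (L,L), $3n/16$ of type (L,I), and $n/16$ of type (L,R). A short case analysis of the $9$ allowed states per pair yields weight-sums of $9/2$ for each (L,L) or (L,R) pair (both elements lie in $[n]_0 \setminus I$ and contribute to the weight) and $13/2$ for each (L,I) pair (only the L-element contributes, since $I$-elements carry weight factor $1$). Multiplying,
\[
\tilde g(n) \;\leq\; \tfrac{1}{2}\cdot 2^{-3n/16}\cdot(9/2)^{5n/16}\cdot(13/2)^{3n/16} \;=\; 2^{cn + O(1)},
\]
where $c = \tfrac{1}{16}(5\log_2 9 + 3\log_2 13 - 11) \approx 0.997 < 1$. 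This yields $\tilde g(n) = O((2-\epsilon)^n)$ with $\epsilon = 2 - 2^c > 0$, as required. The main obstacle is the narrowness of the margin $1 - c \approx 0.003$: the lower bound from Step 1 has to be essentially tight, and the specific endpoints of $I$ in the definition of $\tilde g$ are calibrated so that the prefactor $2^{-|I|}$ just offsets the extra contribution from the (L,I) pairs (where we get $13/2$ rather than $9/2$).
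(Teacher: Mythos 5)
Your proposal is correct and follows essentially the same route as the paper: bound the weight via $S_1\cup S_2\subseteq S_1+S_2$ together with the covering interval, drop the covering condition, and factor the remaining sum over conjugate pairs $\{a,n+1-a\}$ with a $9$-state case analysis per pair; your per-pair factors $9/2$ and $2^{-1}\cdot(13/2)=13/4$ and the resulting rate $\approx 2^{0.997n}$ coincide exactly with the paper's computation, differing only in that you extract $2^{-|I|}$ up front rather than folding the interval elements into the pair weights.
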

 \begin{proof}
    From the definition of $\Tilde{g}$ and Observation \ref{unioninsum}, it follows that
     \begin{align*}
         \Tilde{g}(n)\leq \sum_{\substack{0\in S_1\subseteq [n]_0\\
                    0\in S_2\subseteq [n]_0\\ n+1\notin S_1+S_2}} 2^{-|S_1\cup S_2\cup [\frac{3n}{4}+1,\frac{15n}{16}]|}\eqdef r(n)\text{, say.}
    \end{align*}
    Note that $2r(n)$ factorises into the product
    \begin{align*}
            \Bigg(\prod_{t\in [\frac{3n}{4}+1,\frac{15n}{16}]}\sum_{\substack{S_1\subseteq \{t,n+1-t\}\\
                    S_2\subseteq \{t,n+1-t\}\\ n+1\notin S_1+S_2}}2^{-|S_1\cup S_2\cup \{t\}|}\Bigg)\Bigg(\prod_{\substack{t\in (\frac{n+1}{2},\frac{3n}{4}+1)\\ \text{or }t\in(\frac{15n}{16},n]}}\sum_{\substack{S_1\subseteq \{t,n+1-t\}\\
                    S_2\subseteq \{t,n+1-t\}\\ n+1\notin S_1+S_2}}2^{-|S_1\cup S_2|}\Bigg)
    \end{align*}
    which is at most
    \begin{align*}
            (4\cdot2^{-1}+5\cdot2^{-2})^{\frac{3n}{16}}(2\cdot2^{-2}+6\cdot2^{-1}+1)^{\frac{5n}{16}+3}=O((2-\eps)^n).
     \end{align*}
 \end{proof}

 To bound $g_k(n)$, it will be helpful to consider some sums over independent sets in bipartite graphs. Let us start by setting up some notation. A bipartite graph $G$ with parts $V_1$ and $V_2$ is given by its set of vertices $V_1\sqcup V_2$ and set of edges $E(G)\subset V_1\times V_2$. In particular, we allow common labels between the two parts. For a bipartite graph $G$ with parts $V_1$ and $V_2$, let $I(G)$ denote the set of independent sets in $G$, that is, $I(G)\defeq\{S_1\sqcup S_2:S_1\subseteq V_1, S_2\subseteq V_2\text{ and there is no edge between }S_1\text{ and }S_2\}$.

\begin{definition}
    For a bipartite graph $G$ with parts $V_1$ and $V_2$, let $\sfr(G)$ denote the sum of $2^{-|S_1\cup S_2|}$ over all subsets $S_1\subset V_1$ and subsets $S_2\subset V_2$ such that $S_1\sqcup S_2\in I(G)$. We write this sum as\[\sfr(G)=\sum_{S_1\sqcup S_2\in I(G)}2^{-|S_1\cup S_2|}.\]
\end{definition}

\noindent Note that the union in the exponent of the summand in the definition of $\sfr$ is not disjoint. In particular, this means that $\sfr(G)$ is not just an evaluation of the independence polynomial of $G$.

We will need the following lemma to bound $g_k$.

\begin{lemma}
\label{comput}
    Let $G_n$ denote the union of a path on $n$ vertices and its vertex dsijoint \emph{reflection}, that is, $V(G_n)=[n]\sqcup [n]$, and $E(G_n)=\{(a,b):|a-b|=1\}$. Then, $\sfr(G_n)< 2^n$ for all $n\geq 8$.
\end{lemma}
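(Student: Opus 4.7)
The plan is to compute $\sfr(G_n)$ via a transfer-matrix encoding, extract a short linear recursion for it, and then complete the proof by induction after verifying two consecutive base cases directly. Parametrise a pair $(S_1, S_2)$ with $S_i \subseteq [n]$ by the sequence $(x_k, y_k)_{k=1}^{n}$, where $x_k = 1$ if $k \in S_1$ and $0$ otherwise, and analogously for $y_k$. The weight $2^{-|S_1 \cup S_2|}$ factorises as $\prod_k w(x_k, y_k)$ with $w(0,0) = 1$ and $w(x,y) = \tfrac{1}{2}$ otherwise, and the independence condition on $G_n$ (no edge $|a-b|=1$ across the bipartition) becomes the purely local constraint $x_k y_{k+1} = y_k x_{k+1} = 0$ for every $k$. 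Thus $\sfr(G_n)$ is the sum of weights over allowed trajectories on the four-state alphabet $\{00, 01, 10, 11\}$, and is computed by iterating the $4\times 4$ transfer matrix $T$ whose $(s', s)$-entry is $w(s')$ if the transition $s \to s'$ is allowed and $0$ otherwise.

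Writing out $T$ and tracking the components $a_n, b_n, c_n, d_n$ of the state vector (indexed by $00, 01, 10, 11$), one reads off
\[
    a_{n+1} = s_n, \quad b_{n+1} = \tfrac{1}{2}(a_n + b_n), \quad c_{n+1} = \tfrac{1}{2}(a_n + c_n), \quad d_{n+1} = \tfrac{1}{2} a_n,
\]
where $s_n \defeq \sfr(G_n) = a_n + b_n + c_n + d_n$. Eliminating the four-dimensional state in favour of $s_n$ (using $a_n = s_{n-1}$ and $d_n = \tfrac{1}{2} s_{n-2}$), one arrives at the three-term recursion
\[
    s_{n+1} = \tfrac{3}{2} s_n + s_{n-1} - \tfrac{1}{4} s_{n-2},
\]
the drop in order reflecting the eigenvalue $\tfrac{1}{2}$ of $T$.

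This recursion yields a clean induction step: if $s_{n-1} < 2^{n-1}$ and $s_n < 2^n$, then using $s_{n-2} \geq 0$,
\[
    s_{n+1} \leq \tfrac{3}{2} s_n + s_{n-1} < \tfrac{3}{2} \cdot 2^n + 2^{n-1} = 2^{n+1}.
\]
Starting from $s_1 = 5/2$, $s_2 = 9/2$, $s_3 = 9$ and iterating the recursion, one verifies that $s_8 = 32065/128 < 2^8$ and $s_9 = 124901/256 < 2^9$; these two consecutive base cases suffice to establish $s_n < 2^n$ for every $n \geq 8$.

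The main subtlety is the tightness of the threshold. The characteristic cubic $\lambda^3 - \tfrac{3}{2} \lambda^2 - \lambda + \tfrac{1}{4} = 0$ has dominant root $\lambda^* \approx 1.948$, only just below $2$, and the implicit constant in $s_n \sim A (\lambda^*)^n$ exceeds $1$. Consequently $s_n > 2^n$ persists through $n = 7$ (indeed $s_7 = 8233/64 > 128$), and the inequality only flips at $n = 8$. The base case computation is therefore genuinely forced on us, but once in place the strict inequality propagates inductively with room to spare.
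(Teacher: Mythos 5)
Your proof is correct, and it is essentially the paper's argument: both derive a linear recursion for $\sfr(G_n)$, verify base cases at $n=8,9$ numerically (your values $32065/128$ and $124901/256$ match the paper's $250.5\dots$ and $487.8\dots$), and induct. The only difference is bookkeeping — the paper couples $\sfr(G_n)$ with the auxiliary graph $\sfr(\tilde{G}_n)$ and carries two inductive bounds, while you eliminate the transfer-matrix state to get the single third-order recursion $s_{n+1}=\tfrac{3}{2}s_n+s_{n-1}-\tfrac{1}{4}s_{n-2}$ and drop the negative term, which is a clean equivalent.
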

\begin{proof}
Let $\tilde{G_n}$ denote the bipartite graph with $V(\tilde{G}_n)=[n]\sqcup[n-1]$, and $E(\tilde{G}_n)=\{(a,b):|a-b|=1\}$. Then, notice that
\begin{align}
\label{recursion}
\begin{split}
    &\sfr(G_n)=2^{-1}\sfr(G_{n-2})+\sfr(G_{n-1})+\sfr(\tilde{G}_{n-1})\\
    & \sfr(\tilde{G}_n)=\sfr(G_{n-1})+2^{-1}\sfr(\tilde{G}_{n-1}).
\end{split}
\end{align}

With the initial values $\sfr(G_0)=1$, $\sfr(G_1)=2.5$ and $\sfr(\tilde{G}_{1})=1.5$, one can calculate $\sfr(G_8)=250.5\dots$, $\sfr(G_9)=487.8\dots$, $\sfr(\tilde{G}_9)=337.0\dots$ explicitly. We can now prove by induction that $$\sfr(G_n)<2^n\ \forall n\geq 8\text{ and }\sfr(\tilde{G}_n)<\frac{2}{3}\cdot2^n\ \forall n\geq 9,$$ with $n=8$ and $9$ serving as base cases and the induction step following from the recursion \ref{recursion}.

%250.5078125,  487.89453125, 337.041015625
\end{proof}

We are now ready to execute the final step in our proof, namely, bounding $g_k(n)$.

 \begin{lemma}
 \label{calc2}
 There exists $\eps>0$ so that for $k\in[\frac{n}{16}, \frac{n}{4}]$, it holds that $g_k(n)=O((2-\eps)^n)$.
 \end{lemma}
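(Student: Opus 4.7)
My plan is to reduce $g_k(n)$ to a sum of type $\sfr$ over a bipartite graph $H$, decompose $H$ into paths, pair up the paths, and invoke Lemma~\ref{comput}. By Observation~\ref{unioninsum}, $S_1+S_2\supseteq S_1\cup S_2$, so (dropping the innocuous $0\in S_1\cap S_2$ constraint)
\[
g_k(n)\;\leq\;\sum_{\substack{S_1,\,S_2\subseteq[n]_0 \\ n+1,\,n+1-k\notin S_1+S_2}} 2^{-|S_1\cup S_2|}\;=\;\sfr(H),
\]
where $H$ is the bipartite graph with $V_1=V_2=[n]_0$ and edges $\{(a,b):a+b\in\{n+1,n+1-k\}\}$.

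Each $a\in V_1$ has at most two neighbours in $V_2$, namely $n+1-a$ and $n+1-k-a$, differing by $k$. Following edges, $H$ decomposes into paths $\{P_r\}_{r=0}^{k-1}$, where $P_r$ alternates $V_1$-vertices of residue $r\pmod k$ with $V_2$-vertices of residue $n+1-r\pmod k$. Writing $m_\rho:=\lfloor(n-\rho)/k\rfloor$, and for each $r$ setting $r':=(n+1-r)\bmod k$ and $m_0:=(n+1-r-r')/k$, a short calculation yields $m_r,m_{r'}\in\{m_0,m_0-1\}$.

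The key step is to pair the paths: for $r\neq r'$ with $r+r'\equiv n+1\pmod k$, the $V_1$-labels of $P_r$ coincide with the $V_2$-labels of $P_{r'}$ (both residue $r$), and vice versa. Reindexing label $r+ik$ as position $2i+1$ and label $r'+jk$ as position $2(m_0-j)$, one checks that every edge in $P_r\cup P_{r'}$ corresponds to a position difference of exactly~$1$, and case-checking on $(m_r,m_{r'})\in\{m_0,m_0-1\}^2$ shows that the positions in use form a contiguous interval of length $L:=m_r+m_{r'}+2$. Thus $P_r\cup P_{r'}$ is isomorphic (up to a shift of labels) to $G_L$ in the sense of Lemma~\ref{comput}. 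Since $k\leq n/4$ forces $L\geq 8$, Lemma~\ref{comput} yields $\sfr(P_r\cup P_{r'})=\sfr(G_L)<2^L$; iterating the recursion in its proof further yields a uniform constant $c_0<1$ with $\sfr(G_L)\leq c_0\cdot 2^L$ on the bounded range $L\in[8,32]$ (which covers all $k\in[n/16,n/4]$). At most two paths are self-paired (when $2r\equiv n+1\pmod k$); since $k\geq n/16$, each has $O(1)$ vertices and so contributes only an $O(1)$ factor overall.

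Different pairs use labels of disjoint residues modulo $k$, so $\sfr(H)$ factorises over pairs; moreover $\sum_{\text{pair}}L\leq n+1$ and the number of pairs is at least $k/2\geq n/32$. Consequently,
\[
g_k(n)\;\leq\;\sfr(H)\;\leq\;O(1)\cdot c_0^{n/32}\cdot 2^{n+1}\;=\;O\bigl((2-\eps)^n\bigr),
\]
for $\eps:=2-2c_0^{1/32}>0$. The main obstacle I anticipate is the pairing step itself: carefully verifying the reindexing and the four-case analysis to confirm that the combined graph is exactly $G_L$, so that Lemma~\ref{comput} can be applied directly without any slack from boundary misalignments.
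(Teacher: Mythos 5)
Your proposal is correct and follows essentially the same route as the paper: bound $g_k(n)$ by $\sfr(H)$ via Observation~\ref{unioninsum}, decompose $H$ into maximal paths (your residue classes mod $k$ are exactly the paper's paths), pair each path with its reflection to get copies of $G_L$ with $L$ bounded and at least $8$, and apply Lemma~\ref{comput}, extracting a uniform constant from the bounded range of lengths. The only deviations are cosmetic (keeping $0$ in the vertex set, the slightly off upper range $[8,32]$ versus $[8,34]$, and a more explicit treatment of the self-reflective paths), none of which affects the argument.
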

\begin{proof}
    Construct the bipartite graph $H=H(k)$ with
    $$V(H)=([n],[n])\text{ and }E(H)=\{ab: a+b=n+1\text{ or } a+b=n+1-k\}.$$

    From the definition of $g_k$ and Observation \ref{unioninsum}, it follows that
     \begin{align*}
         g_k(n)\leq \sum_{(S_1,S_2)\in I(H)} 2^{-|S_1\cup S_2|}=\sfr(H)
    \end{align*}
    
    For $a>n-k$, there is a maximal path $a,b,a-k,b+k,\cdots,a-(q-1)k,b+(q-1)k$ in $H$, where $b=n+1-a$ and $q=\lfloor\frac{a-1}{k}\rfloor+1$. In fact, $H$ decomposes into such paths. The bounds on $k$ imply that $q$ lies in $[4,17]$.
    
    There are at most $17^2$ maximal paths where a vertex and its reflection both lie on the same path. So, apart from $O(1)$ vertices, $H$ decomposes into $H_1,\dots, H_l$, where $H_i$ composed of an alternating path on $2q_i\in[8,34]$ vertices and its vertex-disjoint reflection, for all $i\in [l]$. Hence, Lemma $\ref{comput}$ applies to the $H_i$ and we have
    \begin{align*}
        \sfr(H) &\leq 2^{O(1)}\prod_{i=1}^l\sfr(H_i)\leq 2^{O(1)}\prod_{i=1}^l (2-\eps)^{2q_i}=O((2-\eps)^n).
    \end{align*}
\end{proof}

Finally, combining Lemmas \ref{calc1} and \ref{calc2}, we see that $g(n)=O((2-\eps)^n)$ for some $\eps>0$.

\subsection{Height Profile}
\label{heightprofile}

As we remarked at the end of Section \ref{sec bounding s(n)}, we must take into account the interaction between distinct triplets of fibers in order to prove Proposition \ref{dircount}. We will do this as follows. We start by fixing the \emph{height profile} of a sum-free subset $S\subseteq R \cup L$, that is, we specify the element of $S$ of minimum height along each fiber of $R$, and that of maximum height along each fiber of $L$. We introduce the key notion of \emph{discrepancy} for height profiles and show that:
\begin{enumerate}[label=(\alph*)]
    \item there are few sum-free subsets with height profiles of large discrepancy and
    \item height profiles with small discrepancies are almost linear and hence small in number.
\end{enumerate}

\begin{definition}
    For a sum-free subset $S\subseteq R\cup L$, define the height profiles $m:[-2w,2w]\to \mathbb{N}\cup \{0\}$ and $M:[-w,w]\to \mathbb{N}\cup\{0\}$ as follows. For $i\in [-2w,2w]$ let $m(i)$ be the minimum height of an element in $S\cap R_i$. For $k\in [-w,w]$, let $M(k)$ be the maximal height of an element in $S\cap L_k$.
\end{definition}

\begin{definition}
    For $R'\subseteq R\cup L$, $S'$ is a sum-free subset of $R'$ \emph{respecting} the height profile $(m,M)$ if there exists a sum-free subset $S\subseteq R\cup L$ with height profile $(m,M)$ so that $S'=S\cap R'$.
\end{definition}

We expect that in a typical sum-free set, for most $(i,j,k)\in \mathcal{T}$, $|M(k)-m(j)-m(i)|$ is small. Further, this should also hold when $\mathcal{T}$ is replaced by other Schur triples of fibers. We define the following notion of discrepancy to capture departure from this behaviour.

\begin{definition}
    For $s\in\{-1,0,1\}$ and $(i,j,k)\in\mathcal{T}$, let $d_s(i)\defeq M(k+s)-m(j+s)-m(i)$ and let $D_s\defeq\sum_{(i,j,k)\in\mathcal{T}}|d_s(i)|$,
    where the summand is taken to be zero when it is not defined. Finally, we define the discrepancy $D$ as $D\defeq\max\{D_{-1},D_0,D_1\}$.
\end{definition}

\begin{lemma}
\label{disc1}
    There exists an absolute constant $\alpha>0$ so that the following holds. Suppose $i\neq j$ and $k=i+j$ satisfy $|i|,|j|\leq 2w$ and $|k|\leq w$. For any $(m,M)$, the number of sum-free subsets of $R_i\cup R_j\cup L_k$ respecting the height profile $(m,M)$ is at most $2^{0.3n-\alpha|M(k)-m(j)-m(i)|+O(1)}$.
\end{lemma}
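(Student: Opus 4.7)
The plan is to reduce directly to the one-dimensional counting problem already analysed in the proof of Lemma \ref{bounding s by g}. First I apply Lemma \ref{lemschurembedding} to Schur-embed $R_i \cup R_j \cup L_k$ into $\{1,3,4\} \times [\ell]_0$, where $\ell = 0.15n + O(1)$. The embedding sends height $t$ on $R_i$ (resp.\ $R_j$) to $(1,t)$ (resp.\ $(3,t)$), and height $t$ on $L_k$ to $(4,t)$ or $(4,t-1)$ depending on parity. Consequently the height-profile data $(m(i), m(j), M(k))$ translates to integers $A, B, C \in [\ell]_0$ with $A = m(i)$, $B = m(j)$, $C = M(k)$ up to $O(1)$ corrections, and $|M(k) - m(j) - m(i)| = |C - A - B| + O(1)$. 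Counting sum-free subsets of $R_i \cup R_j \cup L_k$ respecting the given height profile is therefore bounded above, up to a factor of $2^{O(1)}$, by the number of sum-free sets $S = S_1 \sqcup S_3 \sqcup S_4 \subseteq \{1,3,4\} \times [\ell]_0$ satisfying $\min S_1 = A$, $\min S_3 = B$, and $\max S_4 = C$.

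Next I mimic the argument of Lemma \ref{bounding s by g} pointwise in $(A, B, C)$ rather than summing over them. If $C \leq A + B$, the trivial product bound $2^{\ell - A} \cdot 2^{\ell - B} \cdot 2^{C + O(1)}$ already gives $2^{2\ell - (A+B-C) + O(1)} = 2^{0.3n - |C-A-B| + O(1)}$. If $C > A + B$, set $\lambda := C - A - B - 1$. Writing $S_1 \cap [A, A+\lambda] = A + S_1'$ and $S_3 \cap [B, B+\lambda] = B + S_2'$ with $S_1', S_2' \subseteq [\lambda]_0$ containing $0$, the sum-free constraint forces $\lambda + 1 \notin S_1' + S_2'$ and excludes $|(S_1' + S_2') \cap [\lambda]_0|$ positions from $S_4$. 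Summing over $(S_1', S_2')$ and over the free positions in columns $1$, $3$, $4$ gives a bound of $2^{2\ell - \lambda + O(1)} g(\lambda)$, exactly as in Lemma \ref{bounding s by g}. Invoking Lemma \ref{bounding g} and writing $2 - \eps = 2^{1 - \alpha_0}$ with $\alpha_0 := 1 - \log_2(2 - \eps) \in (0,1)$ turns this into $2^{2\ell - \alpha_0 \lambda + O(1)} = 2^{0.3n - \alpha_0 |C - A - B| + O(1)}$. Setting $\alpha := \alpha_0$ and absorbing the $O(1)$ difference between $|C-A-B|$ and $|M(k) - m(j) - m(i)|$ into the error term unifies both cases and yields the claimed bound.

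Since the proof mostly repackages machinery developed in Section \ref{sec bounding s(n)}, I expect the main obstacle to be bookkeeping rather than conceptual: I have to verify that the Schur embedding of Lemma \ref{lemschurembedding} transports the min/max height data faithfully (so that $A, B, C$ lie within $O(1)$ of $m(i), m(j), M(k)$), and that the counting in the proof of Lemma \ref{bounding s by g} genuinely goes through pointwise in $(A, B, C)$ and not just after summing. Both checks amount to re-reading those proofs and tracking the constants carefully.
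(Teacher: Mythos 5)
Your proposal is correct and follows essentially the same route as the paper: Schur-embed via Lemma \ref{lemschurembedding}, observe that respecting $(m,M)$ fixes the minima $a,b$ and maximum $c$ in the embedded picture up to $O(1)$, reuse the pointwise count $2^{2\ell-|c-(a+b)|+O(1)}g(c-(a+b)-1)$ from the proof of Lemma \ref{bounding s by g}, and convert $g$ into an exponential saving via Lemma \ref{bounding g}. The only difference is that you spell out the two cases $C\leq A+B$ and $C>A+B$ and the choice $\alpha=1-\log_2(2-\eps)$ explicitly, which the paper leaves implicit.
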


\begin{proof}
    Consider the Schur embedding of $R_i\cup R_j\cup L_k$ into $\{1,3,4\}\times [0.15n+O(1)]_0$ given by Lemma \ref{lemschurembedding}. Notice that sum-free subsets of the former which respect $(m,M)$ correspond to sum-free subsets of the latter with minimum elements $a=m(i)$ and $b=m(j)$ along the fibres $x=1$ and $x=3$ respectively and maximal element $c$ along the fiber $x=4$, where $c$ equals $M(k)$ or $M(k)-1$. Therefore, as in the proof of Lemma \ref{bounding s by g}, we have that the number of such sum-free sets is at most $2^{2\times(0.15n)-|c-(a+b)|+O(1)}g(c-(a+b)-1)$, which is at most $2^{0.3n-\alpha|c-(a+b)|+O(1)}$ for some $\alpha>0$ by Lemma \ref{bounding g}. Noting that $|c-(a+b)|=|M(k)-m(j)-m(i)|+O(1)$, we are done.
\end{proof}

\begin{corollary}
\label{corollary large disc}
    There exists an absolute constant $\alpha>0$ such that the following holds. Let $(m,M)$ be a height profile with discrepancy $D$. Then, the number of sum-free subsets of $R\cup L$ with height profile $(m,M)$ is at most
    \[
    2^{0.24n^2-\alpha D+O(n)}.
    \]
\end{corollary}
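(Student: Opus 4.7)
The strategy is to decompose $R\cup L$ into a near-partition of Schur triples of one-dimensional fibres and apply Lemma~\ref{disc1} to each, in the spirit of Lemma~\ref{bound RUL by s}. The catch is that using the partition $\mathcal{T}$ directly, the per-triple discounts $-\alpha|M(k)-m(j)-m(i)|$ supplied by Lemma~\ref{disc1} sum only to $-\alpha D_0$, whereas we need the bound to feature $-\alpha D=-\alpha\max\{D_{-1},D_0,D_1\}$. The remedy is to exploit the fact that $D$ is achieved by one of three choices of shift, and to work with a correspondingly shifted partition.

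Concretely, I would pick $s\in\{-1,0,1\}$ with $D_s=D$ and work with the shifted collection
\[
\mathcal{T}_s\defeq\{(i,\,j+s,\,k+s):(i,j,k)\in\mathcal{T}\}.
\]
Since $i+(j+s)=k+s$, every element of $\mathcal{T}_s$ is again a Schur triple of fibre-indices. A direct check against the defining ranges of $\mathcal{T}$ shows that all but $O(1)$ elements of $\mathcal{T}_s$ satisfy the hypotheses of Lemma~\ref{disc1}, and that the fibres $\{R_i,R_{j+s},L_{k+s}\}_{(i,j,k)\in\mathcal{T}}$ cover all but $O(n)$ lattice points of $R\cup L$.

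For any sum-free $S\subseteq R\cup L$ with height profile $(m,M)$, each restriction $S\cap(R_i\cup R_{j+s}\cup L_{k+s})$ is sum-free and, by definition, respects $(m,M)$. Absorbing the uncovered lattice points and the $O(1)$ boundary triples into a factor of $2^{O(n)}$, and multiplying the per-triple bounds from Lemma~\ref{disc1}, one obtains
\[
\#\{\text{sum-free }S\subseteq R\cup L\text{ with height profile }(m,M)\}\;\leq\; 2^{O(n)}\prod_{(i,j,k)\in\mathcal{T}}2^{0.3n-\alpha|d_s(i)|+O(1)}.
\]
Using $|\mathcal{T}|=2w=0.8n+O(1)$ and $\sum_{(i,j,k)\in\mathcal{T}}|d_s(i)|=D_s=D$, the right-hand side equals $2^{0.24n^2-\alpha D+O(n)}$, as required.

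The only mildly fiddly step is the boundary bookkeeping that confirms the $O(1)$ exceptional triples in $\mathcal{T}_s$ (arising from cases like $|j+s|>2w$ or $|k+s|>w$, or $L_0$ appearing in a triple) and the $O(n)$ uncovered lattice points of $R\cup L$ are safely absorbed into the $O(n)$ in the exponent; this is routine given the explicit form of $\mathcal{T}$.
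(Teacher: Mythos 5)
Your proposal is correct and is essentially the paper's own argument: the paper likewise fixes a shift $s\in\{-1,0,1\}$ (implicitly the one attaining $D_s=D$), uses the near-partition of $R\cup L$ by the shifted triples $R_i, R_{j+s}, L_{k+s}$ for $(i,j,k)\in\mathcal{T}$, and multiplies the per-triple bounds from Lemma~\ref{disc1}, absorbing the $O(n)$ leftover points into the exponent. Your extra care about the $O(1)$ exceptional triples at the boundary is a slightly more explicit rendering of the same bookkeeping.
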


\begin{proof}
    Let $s\in\{-1,0,1\}$. As $R_i, R_{j+s}, L_{k+s}$ for $(i,j,k)\in \mathcal{T}$ together partition all but $O(n)$ points of $R\cup L$, we have by Lemma \ref{disc1} that the number of sum-free subsets of $R\cup L$ with height profile $(m,M)$ is at most \[2^{O(n)}\prod_{(i,j,k)\in\mathcal{T}}2^{0.3n-\alpha|M(k)-m(j)-m(i)|+O(1)}=2^{0.24n^2-\alpha D_i+O(n)}.\]
\end{proof}

We now show that small discrepancy implies almost linearity for $m$. Let $\Delta$ denote the forward difference operator, that is, $(\Delta f)(i)=f(i+1)-f(i)$.

\begin{lemma}
\label{disc2}
    \[
    \sum_{i=-2w}^{2w-2}|\Delta^2m(i)|=O(D+n)
    \]
\end{lemma}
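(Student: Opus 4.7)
The plan is to express each $\Delta^2 m(i)$ as an explicit linear combination of $d_s(\cdot)$ values for $i$ in the bulk of $[-2w, 2w-2]$, so that summing absolute values yields $O(D)$; the $O(1)$ boundary indices will contribute $O(n)$ via the trivial bound $|\Delta^2 m(i)| \leq 4\max m = O(n)$.

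For positive indices the identities are clean. For an even index $i = 2t$ with $t \in [1, w-1]$, I would check by direct expansion using the first-type triples at $-t$ and $-(t+1)$ that
\[
\Delta^2 m(2t) = d_{-1}(-(t+1)) - d_0(-(t+1)) - d_0(-t) + d_1(-t),
\]
all $M(\cdot)$ and $m(-\cdot)$ contributions cancelling pairwise. For an odd index $i = 2t-1$, the same kind of algebraic cancellation, now using second-type triples at $-w-t$ and $-w-(t+1)$, expresses $\Delta^2 m(2t-1)$ as a signed sum of four $d_s$ values. Summing over $t$ gives $\sum_t |\Delta^2 m(2t)| + \sum_t |\Delta^2 m(2t-1)| \leq 8D$.

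Negative indices require more work. For $i = -\tau$ with $\tau \in [3, w]$ (first-type range), the combination $-d_0(-(\tau-2)) + 2d_0(-(\tau-1)) - d_0(-\tau)$ expands into $\Delta^2 m(-\tau)$ plus a residual consisting of $\Delta^2 M(\tau-2)$ and the step-$2$ second difference $m(2\tau) - 2m(2\tau-2) + m(2\tau-4)$. The latter telescopes as $\Delta^2 m(2\tau-4) + 2\Delta^2 m(2\tau-3) + \Delta^2 m(2\tau-2)$, already controlled by the positive case. For the $\Delta^2 M$ contribution, I would exploit that $\Delta^2 m(2t-1)$ admits two expressions: the second-type one (pure $d_s$) and the first-type one of the shape $\Delta^2 M(t-1) + 2d_0(-t) - d_1(-t) - d_{-1}(-t)$. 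Equating the two recovers $\Delta^2 M(t-1)$ as an explicit linear combination of six $d_s$ values, and hence $\sum_k |\Delta^2 M(k)| = O(D)$ on the non-negative range. The second-type negative range $[-2w, -w-3]$ is handled symmetrically with the analogous triples.

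I expect the main obstacle to be the bookkeeping: one must establish the $\Delta^2 M$ bound via the positive-$i$ identities before invoking it when processing the negative $i$, and one must carefully verify that the index ranges of every identity line up. A handful of indices near $0$, $\pm w$, and $\pm 2w$ where the identities fail for range reasons are absorbed into the trivial $O(n)$ slack, giving the final estimate $\sum_{i=-2w}^{2w-2} |\Delta^2 m(i)| = O(D+n)$.
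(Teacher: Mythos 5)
Your proposal is correct and follows essentially the same route as the paper: express each second difference $\Delta^2 m(i)$ (away from $O(1)$ boundary indices, which are absorbed by the trivial $O(n)$ bound) as a bounded linear combination of the discrepancy terms $d_s(\cdot)$, then apply the triangle inequality and sum, each $|d_s(\cdot)|$ occurring boundedly often; your positive-index identities are exactly the paper's, and your stated expansions (including $\Delta^2 m(2t-1)=\Delta^2 M(t-1)+2d_0(-t)-d_1(-t)-d_{-1}(-t)$ and the telescoping of the step-$2$ difference) all check out. The only difference is cosmetic: for negative indices the paper writes down direct identities in which all $M$-terms cancel, whereas you reach the same bound by first controlling $\sum_k|\Delta^2 M(k)|$ and then telescoping — an equivalent reorganization of the same linear algebra (and your ``six $d_s$ values'' is really seven, which is immaterial).
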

\begin{proof}
    Note first that we have the uniform bound $|\Delta^2m(i)|=O(n)$, so we can ignore $O(1)$ values in the range of the sum.
    For $t\in [w-1]$, we have that
    \begin{align*}
        \Delta^2m(2t)&=-d_0(-t)-d_0(-t-1)+d_{-1}(-t-1)+d_1(-t)\\
        \Delta^2m(2t-1)&=-d_0(-w-t-1)-d_0(-w-t)+d_{-1}(-w-t-1)\\
        &\quad+d_1(-w-t)\\
        \Delta^2m(-t-1)
        &=2d_0(-t)+d_0(-w-t)+d_0(-w-t-1)-d_{-1}(-t-1)\\
        &\quad-d_{-1}(-w-t-1)-d_1(-t+1)-d_1(-w-t)\\
        \Delta^2m(-w-t-1)
        &=d_0(-t)+2d_0(-w-t)+d_0(-t+1)-d_{1}(-t+1)\\
        &\quad -d_{-1}(-w-t-1)-d_{-1}(-t)-d_1(-w-t+1).
    \end{align*}
    Applying the triangle inequality and summing over $t$, we are done.
\end{proof}

As a corollary, we obtain that there are few choices for $(m,M)$ with $D$ small.

\begin{lemma}
\label{lemma number of height profile}
    For any $\eps>0$, there are at most $2^{\eps D+O_{\eps}(n)}$ choices for the height profile $(m,M)$ given $D$.
\end{lemma}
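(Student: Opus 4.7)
The plan is to count the possibilities for $m$ and for $M$ (given $m$) separately, relating both to integer difference sequences whose $\ell^1$ norms are controlled by $D$. The key tool will be the following combinatorial estimate: for every $\delta > 0$, the number of integer sequences $(e_1,\dots,e_L)$ with $\sum_i|e_i|\leq V$ is at most $2^{\delta V + O_\delta(L)}$. This will follow by writing $e_i = e_i^+ - e_i^-$ with $e_i^\pm \geq 0$, which gives the crude upper bound $\binom{V+2L}{2L}$, and then a short case analysis in $V/L$: if $V/L \leq C_\delta$ the binomial is $2^{O_\delta(L)}$, while if $V/L \geq C_\delta$ the estimate $\binom{V+2L}{2L} \leq (e(V+2L)/(2L))^{2L}$ delivers the desired bound, provided $C_\delta$ is chosen so that $2\log_2(e(V+2L)/(2L)) \leq \delta V/L$ for $V/L \geq C_\delta$.

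Taking $\delta = \epsilon/2$, I would first count the number of choices for $m$. Since $m$ is integer-valued and $O(n)$-bounded, it is determined by $m(-2w)$, $(\Delta m)(-2w)$ (contributing $O(n)^2 = 2^{O(\log n)}$ choices in total), and its second-difference sequence $(\Delta^2 m(i))_{i=-2w}^{2w-2}$. By Lemma \ref{disc2}, this sequence has $\ell^1$ norm at most $O(D+n)$ and length $O(n)$, so the combinatorial estimate yields at most $2^{\delta(D+n) + O_\delta(n)} = 2^{(\epsilon/2)D + O_\epsilon(n)}$ choices for $m$.

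With $m$ fixed, I would next bound the number of consistent $M$. For every $(i,j,k) \in \mathcal T$ the value $M(k)$ is forced to equal $m(i) + m(j) + d_0(i)$, so $M$ is determined on all of $[-w,w]$ apart from the $O(1)$ indices missed by $\mathcal T$, which contribute at most $2^{O(\log n)}$ extra choices. What remains is to count integer sequences $(d_0(i))$ with $\sum_i|d_0(i)| = D_0 \leq D$; the combinatorial estimate again gives at most $2^{(\epsilon/2)D + O_\epsilon(n)}$ such sequences. Multiplying the bounds for $m$ and for $M$ given $m$ yields the claimed $2^{\epsilon D + O_\epsilon(n)}$.

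The main technical nuisance I anticipate is the combinatorial estimate itself, and specifically making the case analysis in $V/L$ uniform so that the bound $2^{\delta V + O_\delta(L)}$ holds for \emph{all} $V$ and $L$ rather than merely in an asymptotic regime. Once that estimate is in place, the rest reduces to invoking Lemma \ref{disc2} and some clean bookkeeping.
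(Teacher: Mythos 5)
Your proposal is correct and follows essentially the same route as the paper: reconstruct $m$ from two boundary values plus its second-difference sequence, whose $\ell^1$ norm is $O(D+n)$ by Lemma \ref{disc2}, reconstruct $M$ from $m$ via the $d_0$ values, and count integer sequences of length $O(n)$ with $\ell^1$ norm $O(D+n)$ by a binomial/entropy bound, absorbing the $n\log(D/n+1)$-type term into $\eps D+O_\eps(n)$. The only differences are cosmetic (which boundary data you fix for $m$, and packaging the counting estimate as a standalone lemma with an explicit case analysis rather than the paper's concavity-of-$\log$ bookkeeping).
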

\begin{proof}

    By Lemma \ref{disc2}, $\sum_{i}|\Delta^2m(i)|=O(D+n)$, and so the number of choices for the sequence $|\Delta^2m(i)|$ given $D$ is $O(D+n)\binom{O(n+D+w)}{4w-2}\leq \left(\frac{O(n+D)}{w}\right)^{4w}\leq 2^{O\left(n\log\left(\frac{D}{n}+1\right)+n\right)}$. Further, the number of choices for the signs of $\Delta^2m(i)$ is $2^{O(n)}$. Along with $m(2w-1)$ and $m(2w)$, the sequence $\Delta^2m(i)$ determines $m$ completely and so the number of choices for $m$ is $2^{O\left(n\log\left(\frac{D}{n}+1\right)+n\right)}$.

    Once $m$ is determined, the number of choices for $M$ is at most $\prod_{(i,j,k)\in \mathcal{T}} (2d_0(i)+10)\leq 2^{O\left(n\log\left(\frac{D}{n}+1\right)+n\right)}$, by the concavity of $\log$. Finally, note that $n\log\left(\frac{D}{n}+1\right)\leq \eps D+Kn$ for large enough $K=K(\eps)$.
\end{proof}

\begin{proof}[Proof of Proposition \ref{dircount}]
    Let $\alpha$ be the absolute constant given by Corollary \ref{corollary large disc}. We apply Lemma \ref{lemma number of height profile} with $\eps\defeq \frac{\alpha}{2}$ to obtain that the number of sum-free subsets of $R\cup L$ with discrepancy $D$ is at most $2^{0.24n^2+O(n)-\eps D}$. Summing over $D\geq 0$, we get that $SF(R\cup L)\leq 2^{0.24n^2+O(n)}$ and so $SF(R_0)\leq 2^{0.6n^2+O(n)}$ by Lemma \ref{proprobust}.
\end{proof}

\printbibliography

\end{document}